\theoremstyle{plain}
    \newtheorem{theorem}{Theorem}[section]
    \newtheorem{lemma}[theorem]{Lemma}
    \newtheorem{corollary}[theorem]{Corollary}
    \newtheorem{proposition}[theorem]{Proposition}
\theoremstyle{definition}
    \newtheorem{definition}[theorem]{Definition}
\theoremstyle{remark}
    \newtheorem{remark}[theorem]{Remark}
\numberwithin{equation}{section}
\newcommand{\ZZ}{\mathbb{Z}}
\newcommand{\HH}{\mathbb{H}}
\newcommand{\Econn}{\mathbb{E}}
\newcommand{\Kconn}{\mathbb{K}}
\newcommand{\Nconn}{\mathbb{N}}
\newcommand{\bNconn}{\overline{\mathbb{N}}}
\newcommand{\Edot}{E^{\bullet}}
\newcommand{\kk}{\mathbb{K}}
\newcommand{\complex}{\mathbb{C}}
\newcommand{\A}{\mathcal{A}}
\newcommand{\AOO}{\A^{0,0}}
\newcommand{\AOD}{\A^{0,\bullet}}
\newcommand{\Adot}{\mathcal{A}^{\bullet}}
\newcommand{\PA}{\mathcal{P}_{A}}
\newcommand{\Ho}{\operatorname{Ho}}
\newcommand{\Hom}{\operatorname{Hom}}
\newcommand{\End}{\operatorname{End}}
\newcommand{\Id}{\operatorname{Id}}
\newcommand{\dbar}{\overline{\partial}}
\newcommand{\normal}{N}
\newcommand{\conormal}{N^\vee}
\newcommand{\Dcoh}{\mathcal{D}^b_{coh}}
\newcommand{\Cinf}{$C^\infty$}
\newcommand{\Shat}{\hat{S}}
\newcommand{\ShatV}{\hat{S}(V^\vee)}
\newcommand{\nconn}{\nabla^\bot}
\newcommand{\Dnormal}{\mathfrak{D}}
\newcommand{\upscript}[1]{{\scriptscriptstyle{#1}}}
\newcommand{\Yhat}{{\hat{Y}}}
\newcommand{\Yhatfinite}{{\hat{Y}^{\upscript{(r)}}}}
\newcommand{\Osheaf}{\mathscr{O}}
\newcommand{\Asheaf}{\mathscr{A}}
\newcommand{\Isheaf}{\mathscr{I}}
\newcommand{\formal}{^{\upscript{(\infty)}}}
\newcommand{\Xformal}{{X\formal_Y}}
\newcommand{\dash}{\operatorname{-}}
\newcommand{\Der}{\mathcal{D}er}
\newcommand{\Ldual}{\check{L}}
\newcommand{\Ptilde}{\widetilde{P}_K}
\newcommand{\Kd}{\check{K}}
\newcommand{\disp}{\displaystyle}
\newcommand{\contract}{\llcorner}
\newcommand{\Ktot}{K^\bullet_{Tot}}
\newcommand{\normalbar}{\overline{N}}
\newcommand{\conormalbar}[1]{\normalbar^{{\upscript{\vee} #1}}}
\newcommand{\normalext}{\overline{\mathcal{N}}}
\newcommand{\conormalext}{\normalext^{\upscript{\vee}}}
\newcommand{\Ebar}{\overline{E}}
\newcommand{\Nconnbar}[1]{\bNconn^{\upscript{\vee #1}}}
\newcommand{\Econnbar}{\overline{\Econn}}
\newcommand{\Res}{\operatorname{Res}}
\newcommand{\Rbar}{\overline{R}}
\newcommand{\Alt}{\operatorname{Alt}}
\newcommand{\tr}{\operatorname{tr}}
\newcommand{\treescale}{0.7}
\newcommand{\RHom}{\operatorname{RHom}}
\newcommand{\RHoms}{\mathcal{RH}om}
\newcommand{\Todd}{\operatorname{Td}_\sigma (X/Y)}
\newcommand{\Zplus}{\mathbb{Z}_+}
\newcommand{\Xfirst}{X^{\upscript{(1)}}_Y}
\newcommand{\Xsecond}{X^{\upscript{(2)}}_Y}
\newcommand{\pr}{\operatorname{pr}}
\title{Todd class via Homotopy Perturbation Theory}
\author{Shilin Yu}
\address{Department of Mathematics, University of Pennsylvania, PA 19104-6395, USA
}
\email{shilinyu@math.upenn.edu}
\keywords{Formal neighborhood, Cohesive modules, Homotopy perturbation theory, Grothendieck-Riemann-Roch theorem, Todd class, Bernoulli numbers}
\thanks{This research was partially supported by the grant DMS1101382  from the National Science Foundation.}
\subjclass[2010]{Primary 14B20; Secondary 14C40, 19L10, 18G35}
\begin{document}

\begin{abstract}
  We compute the quantized cycle class of a closed embedding of complex manifolds defined by Grivaux using homotopy perturbation theory. In the case of a diagonal embedding, our approach provides a novel perspective of the usual Todd class of a complex manifold.
\end{abstract}

\maketitle


\section{Introduction}
In \cite{GrivauxEuler}, Grivaux proved a conjecture by Kashiwara and Schapira (\cite{KashSch}) concerning Euler classes of coherent  sheaves, which leads to a nice short proof of the Grothendieck-Riemann-Roch (GRR) theorem in Hodge cohomology for arbitrary proper holomorphic maps between complex manifolds  (also see \cite{Mukai1}, \cite{Mukai2}, \cite{ToledoTong4}). Later in \cite{GrivauxHKR}, he generalized Kashiwara's construction and defined the \emph{quantized cycle class} $q_\sigma(X)$ in $\bigoplus_i H^i (X, N^\vee)$ for any closed embedding $i: X \hookrightarrow Y$ of complex manifolds with conormal bundle $N^\vee$ and a splitting $\sigma$ of its first-order formal neighborhood. In the case of the diagonal embedding $X \hookrightarrow X \times X$, $q_\sigma(X)$ is the usual Todd class of $X$. It remains a question how to compute the class $q_\sigma(X)$ in general. 

In this paper, we answer Grivaux's question by analyzing the formal neighborhood $X\formal_Y$ of the embedding. The main tools we will use are the Dolbeault differential algebra (dga) of a formal neighborhood defined by the author in \cite{DolbeaultDGA} and the dg-category of cohesive modules over the Dolbeault dga, which was developed by Block (\cite{Block1}). Our goal is to compute the class $q_\sigma(X)$ (which we will later denote as $\Todd$ and call as the \emph{generalized Todd class} later) explicitly in terms of the differential geometry of the embedding using homotopy perturbation theory (\cite{Perturb}). Even in the case of the diagonal embedding, our method gives the usual Todd class in a novel way which has not appeared in the literature. The Bernoulli numbers emerge naturally from the recursive formulas of the homotopy perturbation theory.

The Hochschild-Kostant-Rosenberg (HKR) isomorphism in algebraic geometry states that the derived tensor product $\Osheaf_X \otimes^{L}_{\Osheaf_{X \times X}} \Osheaf_X$ ($X$ regarded as the diagonal of $X \times X$) is isomorphic in the derived category of $\Osheaf_X$-modules to $\bigoplus_i \Omega^i_X[i]$, where $\Omega^i_X$ is the sheaf of $i$-forms of $X$. Consider the case of a closed general embedding $i: X \hookrightarrow Y$ of complex manifolds of codimension $d$, then in general it is not true that $\Osheaf_X \otimes^L_{\Osheaf_Y} \Osheaf_X$ is isomorphic to its cohomology sheaf $\bigoplus_i \wedge^i N^\vee$. In \cite{ArinkinCaldararu}, however, Arinkin and C{\u{a}}ld{\u{a}}raru showed that such isomorphism exists if and only if the normal bundle $N$ can be extended to a vector bundle $\mathcal{N}$ over the first-order formal neighborhood $\Xfirst$ of $X$ in $Y$. The HKR isomorphism they constructed depends on the choice of the extension $\mathcal{N}$.

One can also consider the dual HKR isomorphism. In other words, we consider the complex $\RHoms_{\Osheaf_Y}(\Osheaf_X,\Osheaf_X)$, which specializes to Hochschild cohomology in the case of a diagonal embedding. Then with the assumption of Arinkin and C{\u{a}}ld{\u{a}}raru on the first-order formal neighborhood, one can find an isomorphism between $\RHoms_{\Osheaf_Y}(\Osheaf_X,\Osheaf_X)$ and $\oplus_i \wedge^i N[-i]$ in $D^b(\Osheaf_X)$ by resolving the first copy of $\Osheaf_X$ as $\Osheaf_Y$-module. However, there is a composition of isomorphisms in $D^b(Y)$ (all functors are derived),
  \begin{equation}\label{eq:HKRdual}
     \RHom_{\Osheaf_Y} (i_* \Osheaf_X, i_*\Osheaf_X) \simeq i_* \RHom_{\Osheaf_X}(\Osheaf_X, i^! i_* \Osheaf_X) \simeq i_* ( i^*i_*\Osheaf_X  \otimes \omega_{X/Y}[-d]) \simeq  \bigoplus_i \wedge^i N[-i], 
  \end{equation}
where the first one is the Grothendieck-Verdier (GV) duality and the second one is again the HKR isomorphism $i_*i^* \Osheaf_X$. The GV isomorphism is in general not $\Osheaf_X$-linear. To fix this, we assume additionally there is a holomorphic retraction $\sigma$ from $\Xfirst$ to $X$ and take the extension $\mathcal{N}= \sigma^* N$ in the theorem of Arinkin and C{\u{a}}ld{\u{a}}raru. Then the isomorphism \eqref{eq:HKRdual} is $\Osheaf_X$-linear. Now we have two isomorphisms between $\RHoms_{\Osheaf_Y}(\Osheaf_X,\Osheaf_X)$ and $\bigoplus_i \wedge^i N[-i]$, which are not the same. Essentially the quantized cycle class $q_\sigma(X)$ measures their difference. As we will see in our main result, Theorem \ref{thm:main}, and \S ~\ref{sec:gamma}, $\Todd$ fully depends on the second-order formal neighborhood $\Xsecond$. We will show that, when $\sigma^*N$ can be extended to a vector bundle $\Xsecond$, $\Todd$ can be represented by a formula similar to that of the usual Todd class. We also describe the obstruction class for the existence of such an extension.

The paper is organized as follows. In \S~\ref{subsec:perturb}, we will review the basic facts about the homotopy perturbation theory. In \S~\ref{subsec:Koszul_complex}, we consider the toy model of the Koszul complex on the formal neighborhood of the origin of a vector space and study its homological algebraic properties. In~\S \ref{subsec:DolbeaultDGA}, we recall the definition and properties of the Dolbeault dga of a formal neighborhood to the extend we need. In \S~\ref{subsec:cohesive}, we review the basics about cohesive modules. In \S~\ref{sec:Koszul}, we globalize the constructions in \S~\ref{subsec:Koszul_complex} and construct Koszul-type resolutions in the category of cohesive modules. In \S~\ref{subsec:Todd}, we prove our main result for the quantized cycle class. Finally, \S~\ref{sec:gamma} is devoted to the explanation of certain cohomological classes in terms of the second-roder formal neighborhood $\Xsecond$.

\section{Preliminaries}

\subsection{Perturbation Lemma}\label{subsec:perturb}

We recall the basic setup of the \emph{Homological Perturbation Theory} from \cite{Perturb}. Let $(A^\bullet,d_A)$ and $(B^\bullet,d_B)$ be two complexes. Consider a contraction
\begin{displaymath}
  F: B^\bullet \to A^\bullet \quad G: A^\bullet \to B^\bullet, \quad H: B^\bullet \to B^{\bullet-1}
\end{displaymath}
which satisfies the usual identities
\begin{equation}\label{eq:contraction}
  FG = 1_A, \quad 1_B - GF = d_B H + H d_B.
\end{equation}
Adjusting the homotopy $H$ if necessary, one can always assume that the following `side conditions' are also satisfied,
\begin{equation}\label{eq:side_condition}
  FH = 0, \quad HH = 0, \quad HG=0.
\end{equation}
Now suppose we are given a new differential $d_B + t$ on $B^\bullet$ such that $tH$ is locally nilpotent (i.e. for any element $n \in B^\bullet$ there is a positive integer $k(n)$ such that $(tH)^{k(n)}(n)=0$). Then the infinite sum
\begin{displaymath}\label{eq:perturb_X}
  X = t - tHt + tHtHt - \cdots
\end{displaymath}
is well-defined. Introduce
\begin{equation}\label{eq:perturb_contraction}
  F_t = F(1-XH), \quad G_t = (1-HX)G, \quad H_t = H - HXH, (d_A)_t = d_A + FXG
\end{equation}

\begin{proposition}[Basic Perturbation Lemma, \cite{Perturb}]\label{prop:perturbation}
  Under the assumptions above, $(F_t,G_t,H_t)$ is a contraction of the complex $(B^\bullet,d_B + t)$ to the complex $(A^\bullet,(d_A)_t)$ which also satisfies the side conditions.
\end{proposition}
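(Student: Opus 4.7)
The plan is to verify all the required identities by direct algebraic manipulation, orchestrated by a single master identity for $X$ that plays the role of a Maurer--Cartan relation. The side conditions and the contraction $F_tG_t = 1_A$ are then essentially bookkeeping, and the heart of the work is the homotopy identity.

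First I would derive the two recursions $X = t - tHX = t - XHt$ by splitting the leading $t$ off the series $t - tHt + tHtHt - \cdots$. Their immediate consequence is $tHX = XHt = t - X$, together with the operator identities $(1+tH)X = X(1+Ht) = t$, so that in particular $(1+tH)^{-1}\circ t = X$. Next, from $(d_B + t)^2 = 0$ I would record the graded commutator relation $[d_B, t] = -t^2$, where $[d_B, -]$ denotes the graded commutator with the degree-one operator $d_B$; the original homotopy identity reads $[d_B, H] = 1_B - GF$. Applying $[d_B, -]$ to the recursion $X = t - tHX$, using the graded Leibniz rule and inverting $(1+tH)$ on the left via the formula above, a short computation yields the master identity
\begin{equation*}
  [d_B, X] \;=\; -\,X\,GF\,X, \qquad\text{equivalently}\qquad d_B X + X d_B + X G F X = 0.
\end{equation*}

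With this identity in hand, most of the verifications are mechanical. The side conditions $F_tH_t = H_tH_t = H_tG_t = 0$ reduce to $FH = HG = HH = 0$ after expansion; the contraction $F_tG_t = F(1 - XH)(1 - HX)G$ collapses to $FG = 1_A$ for the same reason. That $(d_A)_t$ is a differential follows from $(d_A + FXG)^2 = F[d_B, X]G + (FXG)^2 = -FXGFXG + FXGFXG = 0$. For the chain-map property $G_t(d_A)_t = (d_B + t)G_t$, I would expand both sides and repeatedly substitute $tHX = t - X$ together with $GF = 1_B - [d_B, H]$; after cancellation the remaining discrepancy is $H\bigl([d_B, X] + XGFX\bigr)G$, which vanishes by the master identity. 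The chain-map property for $F_t$ is dual.

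The main obstacle will be the homotopy identity $1_B - G_t F_t = (d_B + t) H_t + H_t (d_B + t)$, since expanding with $H_t = H(1 - XH) = (1 - HX)H$ produces a proliferation of cross-terms of the form $HXH$, $XHX$, $H d_B X$, $X d_B H$, and so on. My strategy is the same as above: apply $tHX = XHt = t - X$ whenever a pattern $tH(\cdots)X$ or $X(\cdots)Ht$ is produced, reduce $d_B H$ and $H d_B$ via the original contraction, and absorb the remaining $X$-quadratic pieces with the master identity. If the substitutions are organized in the right order, everything telescopes to $1_B - (1 - HX) G F (1 - XH) = 1_B - G_t F_t$, finishing the proof.
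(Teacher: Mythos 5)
Your proof plan is sound, and all the identities you outline do in fact close: I have verified the recursion $tHX=XHt=t-X$, the master identity $[d_B,X]=-XGFX$ (your derivation via applying $[d_B,-]$ to $X=t-tHX$, using $t^2HX=t^2-tX$ to collapse the $t^2$ terms, and then inverting $(1+tH)$ works exactly as you say), the side conditions and $F_tG_t=1_A$ from $FH=HH=HG=0$, the square-zero of $(d_A)_t$ via $F[d_B,X]G=-(FXG)^2$, and both chain-map properties and the homotopy identity, in each case reducing the residual discrepancy to $\pm\bigl(H$ or $F\bigr)\bigl([d_B,X]+XGFX\bigr)\bigl(H$ or $G\bigr)=0$. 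Note, however, that the paper does not supply its own proof of this proposition; it simply quotes it from \cite{Perturb}, so there is no in-paper argument to compare against. Your route --- the two recursions for $X$, the Maurer--Cartan-type master identity, then mechanical expansion --- is the standard modern organization of the Basic Perturbation Lemma proof and is a complete and correct way to establish it.
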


\subsection{Koszul complex}\label{subsec:Koszul_complex}

Let $V$ be a vector space over $k=\complex$ of dimension $d$. We want to describe the Koszul resolution of the origin $0 \in V$ in its formal neighborhood $\hat{V}$. The algebra of functions over the single point $0$ is just $k$, while the algebra of functions over $\hat{V}$ is the algebra of formal power series $\Shat(V^\vee) = \prod_{i=0}^{\infty} S^i V^\vee$ the completed free symmetric algebra generated by $V^\vee$. We then have the (completed) Koszul complex $K^\bullet=K^\bullet(V)$
\begin{equation}\label{sq:Koszul_resolution}
  0 \to \Shat(V^\vee) \otimes_\kk \wedge^d V^\vee \xrightarrow{d_K} \cdots \to \Shat(V^\vee) \otimes_\kk \wedge^2 V^\vee \xrightarrow{d_K} \Shat(V^\vee) \otimes_\kk V^\vee \xrightarrow{d_K} \Shat(V^\vee) \to 0
\end{equation}
such that the rightmost $\Shat(V^\vee)$ is of homological degree $0$. Regard $\Shat(V^\vee) \otimes \wedge V^\vee$ as a graded algebra, then the Koszul differential $d_K$ can be defined as the unique $\ShatV$-linear derivation of degree $1$ satisfying 
  \[ d_K( 1 \otimes \overline{v} ) = v \otimes 1, \quad d_K(v \otimes 1) = 0, \quad \forall ~ v \in V^\vee,  \]
where $\overline{v}$ is the same as $v$ as an element in $V^\vee$, yet we distinguish them with different notations so that $v$ is in the symmetric part and has degree $0$ while $\overline{v} \in \wedge^1 V^\vee$ has degree $-1$. We have $d^2_K = 0$.

We also define a derivation $\Ptilde$ on $K^\bullet(V)$ of degree $-1$, determined by 
  \[ \Ptilde(v \otimes 1) = 1 \otimes \overline{v}, \quad \Ptilde(1 \otimes \overline{v}) = 0, \quad \forall ~ v \in V^\vee. \]
Then again $\Ptilde^2 = 0$. Moreover, we have
  \[ [\Ptilde , d_K] = \Ptilde \circ d_K + d_K \circ \Ptilde = (k + l) \Id \quad \text{on} ~ S^l \otimes \wedge^k, \]
which can be checked on the generators. We then define a map $P_K$ on $K^\bullet$ of degree $-1$ by
\begin{equation}
  P_K =
    \begin{cases}
      \disp \frac{1}{k+l} \Ptilde & \text{if} ~ k+l \geq 0  \\
      0 & \text{if} ~ k=l=0
    \end{cases} 
    \quad \text{on} ~ S^l \otimes \wedge^k.
\end{equation}
Note that $P_K$ is no longer a derivation of $K^\bullet$. Neither is it $\ShatV$-linear. Explicitly, we have
\[
  P_K(x \otimes \overline{w}_1  \wedge \cdots \wedge \overline{w}_k) = \frac{1}{k+l}\sum_{i=1}^k (-1)^{i-1} x \cdot w_i \otimes \overline{w}_1 \wedge \cdots \wedge \hat{\overline{w}}_i \wedge \cdots \wedge \overline{w}_k,
\]
for any $x \in S^l V$ and $w_i \in V^\vee$, with $k + l > 0$.

We define two additional maps: the projection $\pi_K: K^\bullet \to \complex$ such that its restriction on $K^0=\Shat(V^\vee) $ is the natural algebra homomorphism $\Shat(V^\vee) \to \complex$ by taking the constant term, and zero elsewhere; the inclusion $i_K$ is the composition $\complex \to K^0 = \Shat(V^\vee) \hookrightarrow K^\bullet$.

\begin{lemma}\label{lemma:Koszul}
  \begin{enumerate}[1)]
    \item
      $P_K^2 = 0$.
    \item
      $\pi_K i_K = \Id_{\complex}$, $d_K P_K + P_K d_K  = \Id_{K^\bullet} - i_K \circ \pi_K$.
    \item
      $(\pi_K, i_K, P_K)$ defines a contraction from $(K^\bullet,d_K)$ to $\complex$.
  \end{enumerate}
\end{lemma}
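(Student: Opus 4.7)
The plan is to verify all three claims by decomposing $K^\bullet$ according to its natural bigrading $S^l V^\vee \otimes \wedge^k V^\vee$ and checking the identities bipiece by bipiece. The crucial observation is that $\Ptilde$ sends $S^l \otimes \wedge^k$ to $S^{l-1} \otimes \wedge^{k+1}$ while $d_K$ sends it to $S^{l+1} \otimes \wedge^{k-1}$, so both operators preserve the total degree $N = k + l$. Since $P_K$ is $\Ptilde$ rescaled by a scalar $1/N$ depending only on $N$, the scaling commutes with multiplication by either operator, and all identities reduce to the already-stated $\Ptilde^2 = 0$ and $[\Ptilde, d_K] = N\cdot\Id$ on each bipiece.

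For part~(1), on $S^l \otimes \wedge^k$ with $k + l > 0$ I would compute $P_K^2 = \Ptilde^2/(k+l)^2 = 0$, while on $S^0 \otimes \wedge^0 = \complex$ the operator $P_K$ vanishes by definition. Part~(2) splits into two pieces: the unit identity $\pi_K i_K = \Id_\complex$ is tautological, since both maps restrict to the identity on constants. For the homotopy identity, I would treat the exceptional bipiece $(l,k)=(0,0)$ separately---there $d_K$ and $P_K$ both vanish but $i_K \pi_K = \Id$, so both sides are zero---while on every other bipiece $S^l \otimes \wedge^k$ with $k + l > 0$ the scalar $1/(k+l)$ pulls through to yield
\[
d_K P_K + P_K d_K \;=\; \frac{1}{k+l}\bigl(d_K \Ptilde + \Ptilde d_K\bigr) \;=\; \Id,
\]
and $i_K \pi_K$ vanishes since $\pi_K$ annihilates every component outside $\complex \cdot 1 \subset K^0$.

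Part~(3) is then essentially assembled from (1) and (2) plus the definition of a contraction in \eqref{eq:contraction}. It remains only to verify that $i_K$ and $\pi_K$ are chain maps: $d_K$ vanishes on the constant component $i_K(\complex) \subset K^0$, and $\pi_K \circ d_K = 0$ because $d_K$ strictly raises the polynomial degree $l$, so the image of $d_K$ in $K^0$ carries no constant term. As a bonus, the side conditions $\pi_K P_K = 0$ and $P_K i_K = 0$ of \eqref{eq:side_condition} follow by trivial degree considerations ($P_K$ strictly raises the exterior degree $k$, and $P_K$ vanishes on $S^0 \otimes \wedge^0$).

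The only real obstacle here is bookkeeping: one must ensure that the exceptional bipiece $(l,k) = (0,0)$ is handled correctly, so that the modification $P_K = 0$ on the constant component exactly balances $i_K \pi_K = \Id$ there, while on every other bipiece $i_K \pi_K$ vanishes and $P_K$ genuinely realizes the chain-homotopy contraction of $\Id$ to $0$.
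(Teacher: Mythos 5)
Your proof is correct, and it matches the paper's intended (though unwritten) argument: the paper states Lemma \ref{lemma:Koszul} without proof, having set up exactly the ingredients you use — the bigrading of $K^\bullet$ by $S^l V^\vee \otimes \wedge^k V^\vee$, the identity $\Ptilde^2=0$, and the commutation relation $[\Ptilde,d_K]=(k+l)\,\Id$ on each bipiece, together with the fact that both $\Ptilde$ and $d_K$ preserve the total degree $k+l$ so that the scalar $1/(k+l)$ factors cleanly through the computations. Your handling of the exceptional bipiece $(l,k)=(0,0)$, where $P_K=0$ balances $i_K\pi_K=\Id$, and your verification of the side conditions and that $i_K$, $\pi_K$ are chain maps, fill in precisely the routine checks the paper leaves to the reader.
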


We define the dual Koszul complex $\Kd^\bullet = \Kd^\bullet(V)$ to be $(K^\bullet)^\vee = \Hom^\bullet_{\Shat(V^\vee)} (K^\bullet,\Shat(V^\vee))$, with the differential
\begin{displaymath}
  d_{\check{K}} (f) =d_{Hom} (f) = (-1)^{i+1} f \circ d_K
\end{displaymath}
for $f \in \check{K}^i = \Hom_{\Shat}(K^{-i},\Shat)$. We have the identification
\begin{equation}\label{eq:Kd_wedge}
  \Kd^i=\Hom_{\Shat}(K^{-i}, K^0)= \ShatV \otimes_\kk \Hom_\kk (\wedge^i V^\vee, \kk) \simeq \ShatV \otimes \wedge^i V,
\end{equation}
where the natural isomorphism $\Hom_\kk (\wedge^i V^\vee, \kk) \simeq \wedge^i V$ is defined by contracting elements of $\wedge^i V$ with $\wedge^i V^\vee$.

\begin{lemma}
 Under the identification \eqref{eq:Kd_wedge}, the differential $d_{\check{K}}$ can be written as  
 \[   
    d_{\Kd} (w_1 \cdots w_l \otimes \overline{v}_1 \wedge \cdots \wedge \overline{v}_k) = -\sum_{i=1}^d w_1 \cdots w_l \cdot \check{e}_i \otimes \overline{e}_i \wedge \overline{v}_1 \wedge \cdots \wedge \overline{v}_k,
  \]
for $w_i \in V^\vee$, $\overline{v}_i \in V$, where $\{ e_i \}$ is a basis of $V$ and $\{ \check{e}_i \}$ is its dual basis in $V^\vee$. 
\end{lemma}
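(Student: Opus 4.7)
The plan is to reduce the claim to an explicit calculation on generators and then to a sign-tracking argument. Since the dual differential $d_{\Kd}$ is defined by post-composition with $d_K$ into $\ShatV$ (which carries the zero internal differential), it is automatically $\ShatV$-linear; the proposed right-hand side is evidently $\ShatV$-linear in the factor $w_1 \cdots w_l$ as well. Hence it suffices to verify the identity when $l = 0$, so that the input is $\eta := 1 \otimes \bar{v}_1 \wedge \cdots \wedge \bar{v}_k \in \ShatV \otimes \wedge^k V$.

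To compare two elements of $\Kd^{k+1} = \Hom_{\ShatV}(K^{-(k+1)}, \ShatV)$ I would evaluate both on an $\ShatV$-generator $1 \otimes \bar{u}_1 \wedge \cdots \wedge \bar{u}_{k+1}$ with $\bar{u}_j \in V^\vee$. Using the definition $d_{\Kd}(\eta) = (-1)^{k+1} \eta \circ d_K$ together with the Leibniz formula
\begin{align*}
  d_K(1 \otimes \bar{u}_1 \wedge \cdots \wedge \bar{u}_{k+1}) = \sum_{j=1}^{k+1} (-1)^{j-1} u_j \otimes \bar{u}_1 \wedge \cdots \wedge \hat{\bar{u}}_j \wedge \cdots \wedge \bar{u}_{k+1}
\end{align*}
and the pairing interpretation of \eqref{eq:Kd_wedge}, the left-hand side becomes an alternating sum in $j$ whose typical term is proportional to $u_j \cdot \langle \bar{v}_1 \wedge \cdots \wedge \bar{v}_k, \bar{u}_1 \wedge \cdots \wedge \hat{\bar{u}}_j \wedge \cdots \wedge \bar{u}_{k+1}\rangle$. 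On the right-hand side, one pairs the proposed expression against the same generator and expands the determinantal pairing $\langle \bar{e}_i \wedge \bar{v}_1 \wedge \cdots \wedge \bar{v}_k, \bar{u}_1 \wedge \cdots \wedge \bar{u}_{k+1}\rangle$ by cofactor expansion along the row of $\bar{e}_i$; then summing over $i$ and invoking the duality relation $\sum_{i=1}^d \langle \bar{e}_i, \bar{u}_j\rangle \check{e}_i = u_j$ collapses the $i$-sum and recovers precisely the same alternating sum in $j$.

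The only substantive obstacle is bookkeeping the signs, since there is a $(-1)^{k+1}$ from the Hom differential, a $(-1)^{j-1}$ from the Koszul differential, and a $(-1)^{j-1}$ from the determinantal expansion, all of which must be reconciled with the overall minus sign in the statement. I would pin down the overall convention by checking the base case $k = 0$ first (where the identity reduces to $d_{\Kd}(1)(1 \otimes \bar{u}) = -u$, so that the formula reads $d_{\Kd}(1) = -\sum_i \check{e}_i \otimes \bar{e}_i$, which is immediate from the duality relation above) and then propagate the convention to general $k$; no deeper idea is required.
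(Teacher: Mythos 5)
The paper states this lemma without proof, so your proposal can only be judged on its own terms. Your strategy is the natural one and it works: reduce to $l=0$ by $\ShatV$-linearity (both $d_{\Kd}$ and the claimed formula are visibly $\ShatV$-linear in the symmetric factor), then evaluate both sides on $\Shat$-module generators $1\otimes\bar u_1\wedge\cdots\wedge\bar u_{k+1}$ of $K^{-(k+1)}$, use the Koszul formula $d_K(1\otimes\bar u_1\wedge\cdots\wedge\bar u_{k+1})=\sum_j(-1)^{j-1}u_j\otimes\bar u_1\wedge\cdots\wedge\hat{\bar u}_j\wedge\cdots\wedge\bar u_{k+1}$, expand the pairing by cofactors along the $\bar e_i$-row, and collapse the $i$-sum via $\sum_i\langle\bar e_i,\bar u_j\rangle\check e_i=u_j$. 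That produces identical $j$-sums on both sides up to an overall sign, which is exactly the structure of a correct proof.

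The one place where you are too casual is the claim that checking $k=0$ "pins down the overall convention." The $k=0$ check only fixes the global minus sign; it says nothing about the degree-dependent normalization of the identification $\Hom_\kk(\wedge^i V^\vee,\kk)\simeq\wedge^i V$, which appears at degree $k$ on the left side and degree $k+1$ on the right. If "contracting" means the raw determinant $\det(\langle v_p,u_q\rangle)$, your two alternating sums differ by $(-1)^{k+1}$ from the $\Hom$-differential against a flat $-1$ from the statement, and the identity fails for odd $k$. The convention that makes the signs close up is the iterated interior product $\iota_{v_1\wedge\cdots\wedge v_k}=\iota_{v_1}\circ\cdots\circ\iota_{v_k}$, consistent with the paper's later definition of $i_H$; this inserts $(-1)^{k(k-1)/2}$ on the left and $(-1)^{(k+1)k/2}$ on the right, and the ratio $(-1)^k$ exactly compensates the discrepancy. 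So you should state this convention explicitly rather than treat $k=0$ as deciding it, and then the sign propagation you describe does indeed go through at every degree. With that caveat the argument is complete and correct.
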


We define a homotopy operator $P_{\Kd}$ of degree $-1$ on $\Kd$ by
\begin{equation}\label{eq:P_Kd}
  \begin{split}
  &P_{\Kd} (w_1 \cdots w_l \otimes \overline{v}_1 \wedge \cdots \wedge \overline{v}_k ) \\
    =  &\frac{1}{l+d-k}  \sum_{\substack{1 \le i \le l \\ 1 \le j \le k}} (-1)^{j} \langle w_i, v_j \rangle w_1 \cdots \hat{w}_i \cdots w_l \otimes \overline{v}_1 \wedge \cdots \wedge \hat{\overline{v}}_j \wedge \cdots \wedge \overline{v}_k,  
   \end{split}
\end{equation}
for $w_i \in V^\vee$, $\overline{v}_i \in V$, whenever $l+d-k > 0$, otherwise $P_{\Kd}=0$. Note that in \eqref{eq:P_Kd} the sign is $(-1)^j$ instead of $(-1)^{j-1}$. Define the cochain map $\pi_{\Kd}: \Kd^\bullet \to \wedge^n V[-d]$ so that its restriction on $\Kd^d = \ShatV \otimes \wedge^d V$ is the natural projection, and zero elsewhere. We also set $i_{\Kd}: \wedge^d V[-d] \to \Kd^n[-d] = \ShatV \otimes \wedge^n V [-d] \subset \Kd^\bullet $ to be the natural inclusion. 

\begin{lemma}\label{lemma:Koszul_dual}
  \begin{enumerate}[1)]
    \item
      $P_{\Kd}^2 = 0$.
    \item
      $\pi_{\Kd} i_{\Kd} = \Id_{\wedge^n V[-d]}$, $d_{\Kd} P_{\Kd} + P_{\Kd} d_{\Kd}  = \Id_{\Kd} - i_{\Kd} \circ \pi_{\Kd}$.
    \item
      $(\pi_{\Kd}, i_{\Kd}, P_{\Kd})$ defines a contraction from $(\Kd^\bullet,d_{\Kd})$ to $\wedge^n V [-d]$.
  \end{enumerate}
\end{lemma}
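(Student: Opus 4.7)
The plan is to parallel the proof of Lemma \ref{lemma:Koszul}, working throughout with the unnormalized operator $\tilde{P}_{\Kd} := (l+d-k) P_{\Kd}$ on the $S^l V^\vee \otimes \wedge^k V$ component. The key structural observation I would exploit is that both $\tilde{P}_{\Kd}$ and $d_{\Kd}$ decompose canonically in terms of elementary operators on the symmetric-exterior algebra:
\[
d_{\Kd} = -\sum_{m=1}^d \mu_{\check{e}_m} \otimes \epsilon_{e_m}, \qquad \tilde{P}_{\Kd} = -\sum_{m=1}^d \partial_{e_m} \otimes \iota_{\check{e}_m},
\]
where $\mu_{\check{e}_m}, \partial_{e_m}$ are multiplication and partial derivative on $\ShatV$ (both even), while $\epsilon_{e_m}, \iota_{\check{e}_m}$ are exterior multiplication and contraction on $\wedge V$ (both odd). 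This reduces all three claims to algebraic identities verified via canonical (anti)commutation relations.

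Part (1) is then immediate: since $\partial_{e_m}$ is even there are no Koszul signs in
\[
\tilde{P}_{\Kd}^2 = \sum_{m,n} \partial_{e_m} \partial_{e_n} \otimes \iota_{\check{e}_m} \iota_{\check{e}_n},
\]
which vanishes because the symmetric-algebra factor is symmetric in $m, n$ while the exterior-algebra factor is antisymmetric. Since $P_{\Kd}$ differs from $\tilde{P}_{\Kd}$ by a nonzero scalar on every component where $P_{\Kd}$ is not already zero, $P_{\Kd}^2 = 0$ follows.

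For part (2), $\pi_{\Kd} i_{\Kd} = \Id$ is immediate from the definitions. For the homotopy identity I would expand $\tilde{P}_{\Kd} d_{\Kd} + d_{\Kd} \tilde{P}_{\Kd}$ using the decomposition above together with the canonical commutation relations $[\partial_{e_m}, \mu_{\check{e}_n}] = \delta_{mn}$ and $\{\iota_{\check{e}_m}, \epsilon_{e_n}\} = \delta_{mn}$. The ``bilinear'' terms $\mu \partial \otimes \epsilon \iota$ cancel between the two summands, leaving
\[
\tilde{P}_{\Kd} d_{\Kd} + d_{\Kd} \tilde{P}_{\Kd} = (L - K + d) \cdot \Id,
\]
where $L = \sum_m \mu_{\check{e}_m} \partial_{e_m}$ and $K = \sum_m \epsilon_{e_m} \iota_{\check{e}_m}$ are the Euler operators multiplying by symmetric and exterior degree respectively. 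Crucially, $d_{\Kd}$ sends $S^l \otimes \wedge^k$ to $S^{l+1} \otimes \wedge^{k+1}$ and so preserves the combination $l + d - k$; the normalization factor $\frac{1}{l+d-k}$ therefore passes cleanly through $d_{\Kd}$, and dividing yields $d_{\Kd} P_{\Kd} + P_{\Kd} d_{\Kd} = \Id$ on every component with $l + d - k > 0$. On the single remaining component $(l, k) = (0, d)$ both $P_{\Kd}$ and $d_{\Kd}$ vanish (the target of $d_{\Kd}$ has exterior degree $d+1 > d$), while $i_{\Kd} \pi_{\Kd}$ restricts to the identity, matching $\Id - i_{\Kd} \pi_{\Kd} = 0$.

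Part (3) then follows by combining (1), (2) and the remaining side conditions $\pi_{\Kd} P_{\Kd} = 0$ and $P_{\Kd} i_{\Kd} = 0$, both forced by pure degree considerations: the image of $i_{\Kd}$ is supported precisely at $(l,k) = (0,d)$ where $P_{\Kd}$ is declared to vanish, while the image of $P_{\Kd}$ always has exterior degree strictly below $d$ and so is killed by $\pi_{\Kd}$. The main obstacle will be the preliminary algebraic step of identifying $d_{\Kd}$ and $\tilde{P}_{\Kd}$ with the correct signs in the ``creation/annihilation'' decomposition and verifying the invariance of $l + d - k$ under $d_{\Kd}$; once these are in place, all three parts collapse to short consequences of canonical (anti)commutation relations.
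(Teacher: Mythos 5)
Your proof is correct, but it takes a genuinely different route from the paper's. The paper proves Lemma~\ref{lemma:Koszul_dual} in one line by transporting everything along the natural isomorphism $\check{K}^\bullet \simeq K^\bullet \otimes (\wedge^d V[-d])$, coming from the pairing $\wedge^k V^\vee \otimes \wedge^d V \simeq \wedge^{d-k} V$; under this twist, $(d_{\Kd}, P_{\Kd}, \pi_{\Kd}, i_{\Kd})$ is identified (``up to signs'') with $(d_K \otimes 1, P_K \otimes 1, \pi_K \otimes 1, i_K \otimes 1)$, and the result is read off from Lemma~\ref{lemma:Koszul}. You instead carry out a self-contained Weyl/Clifford computation: writing $d_{\Kd} = -\sum_m \mu_{\check{e}_m} \otimes \epsilon_{e_m}$ and $\tilde{P}_{\Kd} = -\sum_m \partial_{e_m} \otimes \iota_{\check{e}_m}$, using the CCR $[\partial_{e_m},\mu_{\check{e}_n}] = \delta_{mn}$ on the symmetric factor and the CAR $\{\iota_{\check{e}_m},\epsilon_{e_n}\} = \delta_{mn}$ on the exterior factor to land on $\tilde{P}_{\Kd} d_{\Kd} + d_{\Kd} \tilde{P}_{\Kd} = (L + d - K)\Id$, and then invoking the invariance of $l + d - k$ under both $d_{\Kd}$ and $P_{\Kd}$ to pass the normalization factor through. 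Both routes are sound. The paper's reduction is more economical given that Lemma~\ref{lemma:Koszul} is already in hand, but it hides a sign bookkeeping step (``up to signs'') that is not spelled out; your direct argument is longer but entirely explicit and removes any ambiguity, since you never need to track how the duality pairing interacts with the differential. One small wording quibble: the bilinear terms $\mu\partial \otimes \epsilon\iota$ don't cancel between the two summands so much as combine via the anticommutator to produce $\delta_{mn}$; the outcome is the same. Also, where you claim the $(l,k)=(0,d)$ component is the only one with $l + d - k \le 0$, that is correct since $l \ge 0$ and $k \le d$ force $l + d - k \ge 0$ with equality exactly there, so there is no divide-by-zero issue anywhere.
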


\begin{proof}
  We have the isomorphism $\check{K}^\bullet \simeq K^\bullet \otimes (\wedge^d V [-d])$ via the natural identification $\wedge^k V^\vee \otimes \wedge^d V \simeq \wedge^{d-k} V$. Under this isomorphism, the quadruple $(d_{\Kd}, P_{\Kd}, \pi_{\Kd}, i_{\Kd})$ coincides with $(d_K \otimes 1, P_K \otimes 1, \pi_K \otimes 1, i_K \otimes 1)$ up to signs. The conclusion now follows from Lemma \ref{lemma:Koszul}.
\end{proof}

We form the dga of $\ShatV$-linear isomorphisms of $K^\bullet$
\[ 
  \End^\bullet_{\ShatV}(K^\bullet) =  \Hom^\bullet_{\ShatV}(K^\bullet,K^\bullet)
\]
with the differential 
\[
    d_{\Hom} (f) = d_K f + (-1)^{|f|+1} f d_K. 
\]
We have the canonical identification 
\begin{equation}\label{eq:Hom_tensor}
   \Hom^\bullet_{\Shat}(K^\bullet,K^\bullet) \simeq K^\bullet \otimes_{\Shat} \Hom^\bullet_{\Shat}(K^\bullet,\ShatV) \simeq K^\bullet \otimes_{\Shat} \Kd^\bullet = \bigoplus_{p-q=\bullet} \Shat \otimes \wedge^p V^\vee \otimes \wedge^q V,
\end{equation}
under which the differential $d_{\Hom}$ can be written as
\[
 d_{\Hom} = d_K \otimes 1 + 1 \otimes d_{\Kd},
\]
where the graded tensors obey the Koszul sign rule. More explicitly, for $f \in \Hom(K^{-i}, K^{-i+k}) = \Shat \otimes \wedge^{i-k} V^\vee \otimes \wedge^i V$,
\begin{equation}\label{eq:d_Hom_tensor}
  d_{\Hom}(f) = d_{K} f + (-1)^{i-k} d_{\Kd} f,
\end{equation}
where $d_{\Kd}$ acts on the $\wedge^\bullet V$-component in the tensor representation \eqref{eq:Hom_tensor}.

We construct two different contractions from $\Hom^\bullet$ to the complex $\wedge^\bullet V$ with zero differential. We first define $i_H: \wedge^\bullet V \to \End^\bullet(K^\bullet)$ by
\[ i_H(v_1 \wedge \cdots \wedge v_k) = \iota_{v_1} \circ \cdots \circ \iota_{v_k} \]
for any $v_1 \wedge \cdots \wedge v_k \in \wedge^k V$, where $\iota_{v}$ is the contraction of $v \in V$ with elements in $\wedge^\bullet V^\vee$. $i_H$ is a homomorphism of graded algebras, where $\wedge^\bullet V$ is endowed with the exterior product, while the multiplication on $\End^\bullet$ is the usual composition of maps. Moreover, $i_H$ identifies $\wedge^\bullet V$ as a graded commutative subalgebra of $\End^\bullet$.

There are two projections $\pi_{T}$, $\pi_{GV}$ from $\End^\bullet$ to $\wedge^\bullet V$: $\pi_{T}$ acting on the `rightmost component' of $\End^k(K^\bullet)$,
\[  
  \Hom_{\Shat}(K^{-i}, K^0)= \ShatV \otimes_k \Hom_k (\wedge^i V^\vee, k) \simeq \ShatV \otimes \wedge^i V
\]
is the restriction to the constant term of $\ShatV$ ($k \geq 0$), and zero on $\Hom(K^{-i}, K^{-j})$ for $j > 0$; $\pi_{GV}$ is defined similarly, but instead of taking the `rightmost component', $\pi_{GV}$ takes the constant term of 
\[  \Hom_{\Shat}(K^{-d},K^{k-d}) = \ShatV \otimes_\kk \Hom_\kk(\wedge^d V^\vee, \wedge^{k-d} V^\vee) \simeq \ShatV \otimes_\kk \wedge^k V, \]
where we use the identification $\Hom_\kk(\wedge^d V^\vee, \wedge^{k-d} V^\vee) \simeq \wedge^k V$ again via contracting $\wedge^k V$ with $\wedge^d V^\vee$.

Next we define a homotopy operator $P_T$ on $\Hom^\bullet$ of degree $-1$ by
\begin{equation}\label{eq:P_T}
  P_T(f) = \sum_{i \geq 0} (-1)^i P_K  (\delta d_{\Kd} P_K)^i (f),
\end{equation}
where $\delta$ is the sign appearing in \eqref{eq:d_Hom_tensor} and we use the tensor representation of $f$ as in \eqref{eq:Hom_tensor}. The second homotopy operator $P_{GV}$ is defined similarly by
\begin{equation}\label{eq:P_GV}
  P_{GV}(f) = \sum_{i \geq 0} (-1)^i \delta P_{\Kd}(d_K \delta P_{\Kd})^i (f).
\end{equation}

\begin{lemma}\label{lemma:Hom_contraction}
  \begin{enumerate}[1)]
    \item
      $\pi_{T} i_H = \pi_{GV} i_H= \Id_{\wedge^\bullet V}$
    \item  
      We have
        \begin{equation}\label{eq:htpy_T}
          d_{\Hom} P_{T} + P_{T} d_{\Hom}  = \Id_{\Hom} - i_H \circ \pi_{T} 
         \end{equation}
      and 
        \begin{equation}\label{eq:htpy_GV}
          d_{\Hom} P_{GV} + P_{GV} d_{\Hom}  = \Id_{\Hom} - i_H \circ \pi_{GV}. 
        \end{equation}
      Hence $(\pi_T, i_H, P_T)$ and $(\pi_{GV}, i_H, P_{GV})$ define two contractions from $\End^\bullet_{\Shat}(K^\bullet)$ to $\wedge^\bullet V$.
    \item
      Moreover, both contractions satisfy the side condtions, i.e.,
        \[ \pi_T P_T = 0, \quad P_T^2 =0, \quad P_T i_H = 0,\]
      and
        \[ \pi_{GV} P_{GV} = 0, \quad P_{GV}^2 =0, \quad P_{GV} i_H = 0. \]
  \end{enumerate}
\end{lemma}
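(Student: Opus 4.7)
My plan is to derive both contractions from the Basic Perturbation Lemma (Proposition~\ref{prop:perturbation}) applied to the tensor decomposition $\End^\bullet \simeq K^\bullet \otimes_\Shat \Kd^\bullet$ in \eqref{eq:Hom_tensor}. I treat $(\pi_T, i_H, P_T)$ in detail; the case $(\pi_{GV}, i_H, P_{GV})$ is handled by the symmetric argument exchanging the roles of $K^\bullet$ and $\Kd^\bullet$.

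For $(\pi_T, i_H, P_T)$: I tensor the contraction $(\pi_K, i_K, P_K)$ of Lemma~\ref{lemma:Koszul} with the identity on the $\wedge^\bullet V$-factor (absorbing the $\Shat$-factor into the $K^\bullet$-side) to obtain an initial contraction of $(\End^\bullet, d_K)$ to $(\wedge^\bullet V, 0)$ satisfying the side conditions. Perturbing by $t = \delta d_{\Kd}$ yields total differential $d_{\Hom}$; local nilpotency of $tH = \delta d_{\Kd} \circ P_K$ follows because each application strictly raises the wedge-$V$ degree, which is bounded by $d$. Proposition~\ref{prop:perturbation} then produces a new contraction $(F_t, G_t, H_t)$ satisfying the side conditions. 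Direct inspection gives $H_t = \sum_{n \geq 0}(-1)^n(P_K\, \delta d_{\Kd})^n P_K = P_T$, matching \eqref{eq:P_T}. The key simplification $\pi_K \circ \delta d_{\Kd} = 0$ -- which holds because $\delta d_{\Kd}$ multiplies the $\Shat$-component by some $\check{e}_i$ and $\pi_K$ requires the constant term -- immediately forces $F_t = \pi_K \otimes 1 = \pi_T$ and $(d_A)_t = F X G = 0$. The equality $\pi_T i_H = \Id$ in part~(1) is verified by observing that the component of $\iota_v$ in $\Hom(K^{-k}, K^0)$ equals $1 \otimes 1 \otimes v$ in the representation \eqref{eq:Hom_tensor}.

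The remaining task is to identify $G_t$ with $i_H$. Both maps are sections of $\pi_T$ (automatic for $G_t$ from $F_t G_t = \Id$) and both land in $\Ker d_{\Hom}$: for $i_H$, this follows from the graded commutation $[d_K, \iota_v]_+ = 0$ combined with the Koszul-sign compatibility of $\iota_v$ with $d_{\Kd}$ dictated by \eqref{eq:d_Hom_tensor}. Moreover, since $\iota_v$ is $\Shat$-linear, its tensor representation has only constant-$\Shat$ components, so $P_K i_H = 0$ and consequently $P_T i_H = 0$; the analogous $P_T G_t = 0$ is the Perturbation Lemma's side condition $H_t G_t = 0$. Uniqueness of a section of $\pi_T$ lying in $\Ker(d_{\Hom}) \cap \Ker(P_T)$ then forces $G_t = i_H$. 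The homotopy identity \eqref{eq:htpy_T} and the remaining side conditions in~(3) follow directly from Proposition~\ref{prop:perturbation}.

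The parallel argument for $(\pi_{GV}, i_H, P_{GV})$ uses the contraction $(\pi_{\Kd}, i_{\Kd}, P_{\Kd})$ of Lemma~\ref{lemma:Koszul_dual} tensored with the identity on the $\wedge^\bullet V^\vee$-factor, perturbed by $t = \delta d_K$. The vanishing $\pi_{\Kd} \circ \delta d_K = 0$ (since $d_K$ raises the $\Shat$-degree) yields $(d_A)_t = 0$, and the identification $\wedge^{d-k} V^\vee \otimes \wedge^d V \simeq \wedge^k V$ built into $\pi_{GV}$ puts the small complex in $\wedge^\bullet V$. I expect the main obstacle to be the careful bookkeeping of Koszul signs from \eqref{eq:d_Hom_tensor} so as to verify $G_t = i_H$ in both cases, since $\Shat$ is shared between the two tensor factors and the homotopy $P_K$ (respectively $P_{\Kd}$) is not $\Shat$-linear.
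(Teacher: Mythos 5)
Your proof is correct and arrives at the same contractions the paper constructs, but by a genuinely different route. The paper verifies the homotopy identity \eqref{eq:htpy_T} by a direct computation: it first establishes $d_{\Hom} P_T + P_T d_{\Hom} = 1 - r$ with $r = \sum_i (-1)^i (P_K \delta d_{\Kd})^i \Res$ (here $\Res = (i_K\otimes 1)(\pi_K\otimes 1)$), and then asserts, without detail, that $r = i_H \circ \pi_T$; parts (1) and (3) are left to the reader entirely. You instead package the whole lemma as an application of the Basic Perturbation Lemma to the initial contraction $(\pi_K\otimes 1, i_K\otimes 1, P_K\otimes 1)$ of $(\End^\bullet, d_K\otimes 1)$ perturbed by $\delta d_{\Kd}$, which simultaneously delivers the homotopy identity and the side conditions, and reduces the content of all three parts to the single identification $G_t = i_H$. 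Your observation that $\pi_T t = 0$ (so $(d_A)_t = 0$ and $F_t = \pi_T$) and your uniqueness argument for a section of $\pi_T$ lying in $\Ker(d_{\Hom}) \cap \Ker(P_T)$ — which follows by applying the perturbed homotopy identity to the difference of two such sections — cleanly supply the verification that the paper omits. This buys a more conceptual proof at the modest cost of the sign bookkeeping you flag at the end, and makes transparent why the side conditions carry over. In short: same destination, different (and somewhat more self-contained) path.
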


\begin{proof}
  We only check 2) and leave the proofs of other parts to the reader. To show \eqref{eq:htpy_T}, first notice that we have
\begin{equation}
  d_{\Hom} P_T + P_T d_{\Hom} = 1 - r
\end{equation}
where
\begin{equation}
  r = \sum_{i} (-1)^i (P_T \delta d_{\Kd})^i \Res,
\end{equation}
and $\Res: \Hom(K^{-p}, K^{-q}) \to \Hom(K^{-p}, K^{-q})$ is zero except when $q=0$, in which case $\Res$ takes the constant term of $f \in \Hom_{\Shat}(K^{-\bullet},\Shat) = \Shat \otimes \Kd^\bullet$ in the $\Shat$-component (cf., \cite{ToledoTong1}, (8.17), \cite{ToledoTong2}, (1.10)). Then we can check that $r$ is exactly equal to $i_H \circ \pi_T$. The proof of \eqref{eq:htpy_GV} is similar.
\end{proof}

We also consider the subcomplex $\Der^\bullet_{\Shat}(K^\bullet) \subset \End^\bullet_{\Shat}(K^\bullet)$ of $\ShatV$-linear derivations of the graded algebra $K^\bullet$. Since a derivation is determined by its value on the generator $K^{-1}$, we have
\begin{equation}
  \Der^k_{\Shat}(K^\bullet) \simeq \Hom_{\complex}(V^\vee, \Shat(V^\vee) \otimes \wedge^{k+1} V^\vee) = \Shat(V^\vee) \otimes \wedge^{k+1} V^\vee \otimes V.
\end{equation}
\begin{lemma}\label{lemma:der}
  $P_T$ and $P_{GV}$ preserve $\Der^\bullet_{\Shat}(K^\bullet)$. Moreover, they coincide on $\Der^\bullet$,
  \[  P_T (f) = P_{GV}(f) = P_{K}(f), \]
using the tensor representation of $f \in \Der^\bullet$.
\end{lemma}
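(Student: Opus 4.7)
The plan is to verify the preservation and the equality in three stages, exploiting the tensor decomposition \eqref{eq:Hom_tensor} and the antisymmetry of the wedge product. First I would observe that $P_K$ visibly preserves $\Der^\bullet$: under the identification $\End^\bullet \simeq K^\bullet \otimes_{\Shat} \Kd^\bullet$, a derivation is exactly an element with $\wedge V$-degree $q = 1$, and $P_K$ acts only on the $\Shat \otimes \wedge V^\vee$ factor, leaving the $V$-factor untouched. Hence $P_K(f) \in \Shat \otimes \wedge^{\bullet} V^\vee \otimes V$ is again a derivation. Under the identification of Lemma \ref{lemma:Koszul_dual} ($\Kd^\bullet \simeq K^\bullet \otimes \wedge^d V[-d]$), $P_{\Kd}$ corresponds to $P_K \otimes \Id$ up to signs, so the same observation will feed into the treatment of $P_{GV}$.

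Next I would analyse $P_T(f) = P_K(f) + \sum_{i \geq 1} (-1)^i P_K(\delta d_{\Kd} P_K)^i(f)$. Each application of $d_{\Kd}$ raises the $\wedge V$-degree by one while $P_K$ preserves it, so the $i$-th summand has $\wedge V$-degree $i+1$. Therefore both the preservation of $\Der^\bullet$ and the equality $P_T(f) = P_K(f)$ reduce to showing that $P_K (\delta d_{\Kd} P_K)^i(f)=0$ for every $i\geq 1$. The heart of the matter is the case $i=1$: writing a general $f \in \Der$ as $s \otimes \omega \otimes v$ with $\omega \in \wedge^{k+1} V^\vee$, $v \in V$, and expanding $P_K = \tfrac{1}{k+l}\Ptilde$ using the derivation $\Ptilde$ on $\Shat \otimes \wedge V^\vee$, one obtains a double sum indexed by the wedge factor removed at the outer $P_K$ and the wedge factor removed at the inner $P_K$. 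The contributions of the two orderings $(j,j')$ and $(j',j)$ have the same $\Shat$-coefficient but opposite signs (from the antisymmetry of $\bar v_j \wedge \bar v_{j'}$ in the freshly produced wedge), so they cancel. The vanishing of the higher $i \geq 2$ terms then follows by induction: since $(\delta d_{\Kd} P_K)$ applied to $P_K(\delta d_{\Kd} P_K)^{i-1}(f) = 0$ gives zero, so does the next outer $P_K$.

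For $P_{GV}(f) = \sum_{i \geq 0} (-1)^i \delta P_{\Kd}(d_K \delta P_{\Kd})^i(f)$ I would transport the whole computation through the isomorphism of Lemma \ref{lemma:Koszul_dual}: $P_{\Kd}$, $d_{\Kd}$ on $\Kd^\bullet$ become $P_K$, $d_K$ on $K^\bullet$ tensored by $\wedge^d V[-d]$. The restriction of $f \in \Der$ to the relevant component $\Hom(K^{-d}, K^{-d+k})$ still has the ``single wedge-$V$ factor'' structure, and the same pairing-and-cancellation argument applies in this dualized setting, yielding $P_{GV}(f) = P_K(f)$.

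The main obstacle is the explicit sign bookkeeping in the key identity $P_K (\delta d_{\Kd} P_K)(f) = 0$: one has to keep track of the position of the removed wedge factor in both $P_K$ applications, the signs coming from the formula for $d_{\Kd}$ in \eqref{eq:d_Hom_tensor}, and the sign $\delta$ attached to $d_{\Kd}$, before the antisymmetric cancellation emerges cleanly.
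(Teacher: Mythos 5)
The core issue is your identification of $\Der^\bullet$ with the ``$\wedge V$--degree $q=1$'' slice of the decomposition $\End^\bullet \simeq \bigoplus_{p,q}\Shat\otimes\wedge^pV^\vee\otimes\wedge^qV$. That identification is not what the paper means by ``tensor representation,'' and it is false. Under the decomposition in \eqref{eq:Hom_tensor}, the factor $\wedge^q V$ is $\Hom(K^{-q},\Shat)$: an element with $\wedge V$--degree exactly $q$ is an $\Shat$-linear map that is nonzero \emph{only} on $K^{-q}$. A derivation $f$ of $K^\bullet$, viewed as an element of $\End^\bullet$, has nonzero components on every $K^{-p}$ with $p\ge 1$; these are determined from $f|_{K^{-1}}$ by the Leibniz rule (e.g.\ $d_K$ is nonzero on all $K^{-p}$, so its image in the decomposition is spread over all $q\ge 1$). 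The isomorphism $\Der^k\simeq\Shat\otimes\wedge^{k+1}V^\vee\otimes V$ of the paper is the restriction-to-$K^{-1}$ map, not a sub-decomposition of $\End^\bullet$.

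Because of this, your reduction ``the $i\ge 1$ terms must vanish'' is not the right target. If all terms $P_K(\delta d_{\Kd}P_K)^i(f)$ with $i\ge 1$ vanished, $P_T(f)$ would live purely in $q=1$, i.e.\ would be a map nonzero on $K^{-1}$ and zero on $K^{-p}$ for $p\ge 2$ --- which is \emph{not} a derivation unless it is zero. In fact the $i\ge 1$ terms do not vanish; they supply precisely the higher $q$ components required for $P_T(f)$ to be a derivation. Your antisymmetry cancellation also has a gap: when you expand $P_K(\delta d_{\Kd}P_K)(f)$ with $f = s\otimes\omega\otimes v$, the double sum has two kinds of terms. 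In one, both applications of $P_K$ transfer factors $w_j,w_{j'}$ coming from the original symmetric part $s$; those indeed cancel in pairs by antisymmetry of $\overline{w}_j\wedge\overline{w}_{j'}$. But $d_{\Kd}$ multiplies the symmetric part by $\check e_a$ (while wedging $\overline e_a$ onto the $\wedge V$ factor), and the second $P_K$ can pick up that freshly inserted $\check e_a$ rather than one of the original $w_j$'s. Those contributions come with $\sum_a\overline{\check e_a}\otimes\overline e_a$, which is the identity tensor, and they do not cancel --- working out what they are, you find the Leibniz extension of $P_K(f|_{K^{-1}})$ to $K^{-2}$.

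So the correct shape of the argument is the opposite of what you propose: one must show that the sum $\sum_{i\ge 0}(-1)^iP_K(\delta d_{\Kd}P_K)^i$, applied to a derivation $f$ with components in all $q\ge 1$, reassembles term by term into the unique derivation whose restriction to $K^{-1}$ is $P_K(f|_{K^{-1}})$ (and analogously through the $\Kd^\bullet\simeq K^\bullet\otimes\wedge^dV[-d]$ identification for $P_{GV}$). Your Step~1 observation that $P_K$ does not touch the $\wedge V$ factor and your dualization strategy for $P_{GV}$ are both sound ingredients, but the reduction to ``higher terms vanish'' and the cancellation argument as stated are a genuine gap.
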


\subsection{Dolbeault dga of formal neighborhood}\label{subsec:DolbeaultDGA}

Let $i: X \hookrightarrow Y$ be a closed embedding of complex manifolds. The \emph{$r$-th formal neighborhood $\Yhatfinite$} of $X$ in $Y$ ($r \ge 0$) is the ringed space $(X, \Osheaf_{\Yhatfinite})$ whose structure sheaf is
\begin{displaymath}
  \Osheaf_{\Yhatfinite} = \Osheaf_Y / \Isheaf^{r+1}.
\end{displaymath}
The (complete) formal neighborhood $\Yhat$ of the embedding is the ringed space $(X, \Osheaf_{\Yhat})$ where the structure sheaf is given by
\begin{displaymath}
 \Osheaf_{\Yhat} = \varprojlim_{r} \Osheaf_{\Yhatfinite} = \varprojlim_{r} \Osheaf_X / \Isheaf^{r+1}.
\end{displaymath}
We denote by $\hat{i}: X \hookrightarrow \Yhat$ the embedding of $X$ into the formal neighborhood $\Yhat$. 

In \cite{DolbeaultDGA}, we have defined the \emph{Dolbeault dgas} $(\A^\bullet(\Yhatfinite), \dbar)$ and $(\A^\bullet(\Yhat),\dbar)$ for $\Yhatfinite$ and $\Yhat$, respectively (see Definition 2.3, \cite{DolbeaultDGA}). They are the analogues of the usual Dolbeault complex of a complex manifold. For instance, they can be localized to sheaves of dgas $(\Asheaf^\bullet_{\Yhatfinite},\dbar)$ and $(\Asheaf^\bullet_{\Yhat},\dbar)$, respectively, and give soft resolution of the structure sheaves $\Osheaf_{\Yhatfinite}$ and $\Osheaf_{\Yhat}$, respectively (see Prop. 2.8, \cite{DolbeaultDGA}). Locally, $\Asheaf^\bullet_{\Yhat}$ is isomorphic to $\AOD_X(\Shat(N^\vee))$, the Dolbeault complex of the completed symmetric algebra $\Shat(N^\vee)$ of the conormal bundle $N^\vee$ of the embedding, with the usual differential $\dbar$.

Globally, however, we have to deform the differential $\dbar$ on $\AOD_X(\Shat(N^\vee)$ so that it is isomorphic to $\A^\bullet(\Yhat)$.  This is the main content of Theorem 4.5 of \cite{LinfAlgebroid}. We only briefly summarize the result under the special situation considered here. Recall a holomorphic splitting $\sigma$ of the short exact sequence
\begin{equation}\label{seq:normal}
  0 \to TX \to i^*TY \to N \to 0
\end{equation}
is equivalent to a splitting of the first order formal neighborhood $X^{\upscript{(1)}}_Y \simeq \Osheaf_X \oplus N^\vee$, or equivalently, an isomorphism of dgas
\begin{equation}\label{eq:2nd_formal}
 (\A^\bullet(X^{\upscript{(1)}}_Y), \dbar) \simeq (\AOD_X(S^{\leq 1} N^\vee ), \dbar) = (\AOD(X) \oplus \AOD_X(S^{\leq 1} N^\vee ), \dbar), 
\end{equation}
where the $\dbar$-derivation on the second dga is the usual one. If in addtion we fix a section of the jet bundle of flat torsion-free connections of $TY$ along $X$, by Theorem 4.5., \cite{LinfAlgebroid}, there exists an isomorphism of dgas
  \begin{equation}\label{eq:Dolbeault_isom}
    (\A^\bullet(\Yhat), \dbar) \simeq (\AOD_X(\Shat(N^\vee)), \Dnormal) 
  \end{equation}
which extends the isomorphism \eqref{eq:2nd_formal}, such that the derivation $\Dnormal$ is given by
 \begin{equation}\label{eq:Dnormal}
    \Dnormal = \dbar + \sum_{k \geq 2} \widetilde{R}^\bot_k + \sum_{k \geq 1} \widetilde{B}_k \circ \nabla^\bot.
  \end{equation}
where $R^\bot_k  \in \A^{0,1}_X(\Hom(\conormal, S^k \conormal))$, $R^\bot_2$ is a representative of a class 
  \[  [R^\bot_2] \in H^1(X, \Hom(\conormal, S^2 \conormal)) \] 
given by the projection of the Atiyah class $\alpha_{TY}$ of $TY$ to $H^1(X, \Hom(\conormal, S^2 \conormal))$ determined by the splitting $\sigma$, $B_k \in \A^{0,1}_X(\Hom(T^*X, S^k N^\vee))$,  $~\widetilde{~}~$ stands for the derivation of degree $1$ induced by the tensors and symmetrization, and $\nconn$ is a smooth $(1,0)$-connection on the conormal bundle $N^\vee$ along $X$. Since we have a holomorphic splitting $\sigma$, $B_1 = 0$ and $B_2 = R^\top_2$, which is a representative of the $H^1(X, \Hom(T^*X, S^2 \conormal))$-component of $\alpha_{TY}$. For details see Theorem 4.5. and Remark 4.8. of \cite{LinfAlgebroid}.

In the case of the diagonal embedding $X \hookrightarrow X \times X$, we can choose $\sigma$ by identifying the normal bundle with $\pr^*_2 TX$, where $\pr_2$ is the projection of $X \times X$ to its second component. The isomorphism \eqref{eq:Dolbeault_isom} then specializes to (see \cite{Diagonal})
  \begin{equation}\label{eq:Dolbeaut_isom_diag}
    (\A^\bullet(X\formal_{X \times X}), \dbar) \simeq ( \AOD_X(\Shat (T^*X)), \Dnormal ),
  \end{equation}  
where 
  \begin{equation}\label{eq:Dnormal_diag}
     \Dnormal = \dbar + R_{TX} + \cdots
  \end{equation}
up to order $2$. $R_{TX} \in \A^{0,1}_X( \Hom(T^*X, S^2 T^*X) )$ is a representative of the Atiyah class $\alpha_{TX}$ of $TX$.

\subsection{Cohesive modules}\label{subsec:cohesive}

We recall the definition of cohesive modules over a dga from \cite{Block1}.

\begin{definition}[\cite{Block1}]\label{defn:cohesive}
  Let $A=(\Adot, d)$ be a dga. A \emph{cohesive module} $E=(\Edot,\Econn)$ over $A$ is the following data: 
  \begin{enumerate}
    \item
      A bounded $\ZZ$-graded left module $\Edot$ over $\A=\A^0$ which is finitely generated and projective,
    \item
      a $k$-linear $\ZZ$-connection
        \[\Econn : \Adot \otimes_\A \Edot \to \Adot \otimes_\A \Edot,\]
      which is of total degree one and satisfies the Leibniz condition
        \[\Econn(\omega \otimes e) = d \omega \cdot e + (-1)^{|\omega|} \Econn(1 \otimes e) \]
      and the integrability condition
        \[ \Econn \circ \Econn  = 0.\]
      Such a connection is determined by its value on $\Edot$. So we have $\Econn = \Econn^0 + \Econn^1 + \Econn ^2 + \cdots$, where $\Econn^k : \Edot \to \A^k \otimes_\A E^{\bullet -k+1}$ is the $k$th component of $\Econn$. $\Econn^1$ satisfies the Leibniz rule on each $E^n$ while $\Econn^k$ is $\A$-linear for $k \neq 1$.
  \end{enumerate}
      
   Cohesive modules over $A$ defined as above form a dg-category $\PA$, \emph{the perfect categroy of $A$}, such that the morphism set $\PA^{\bullet}(E_1,E_2)$ between any two cohesive modules $E_1=(E^{\bullet}_1, \Econn_1)$ and $E_2=(E^{\bullet}_2, \Econn_2)$ is a complex, of which the $k$th component $\PA^k(E_1, E_2)$ is defined to be
        \[\{\phi : \Adot \otimes_\A E^\bullet_1  \to \Adot \otimes_\A E^\bullet_2 \vert \deg \phi = k \ \mathrm{and} \ \phi(a \cdot e)=(-1)^{k|a|} a \phi(e), \forall a \in \Adot\}.\]
      The differential $d: \PA^{\bullet}(E_1,E_2) \to \PA^{\bullet+1}(E_1,E_2)$ is defined by
        \[d(\phi)(e)=\Econn_2(\phi(e))-(-1)^{\vert\phi\vert}\phi(\Econn_1(e)).\]
      A morphism $\phi \in \PA^k(E_1,E_2)$ is determined by its restriction to $E^\bullet_1$ and we denote the components of $\phi$ by
        \[\phi^j : E^\bullet_1 \to \A^j \otimes_\A E_2^{\bullet+k-j},\]
      which are all $\A$-linear. The composition map
      \begin{displaymath}
        \PA^i(E_2,  E_3) \otimes_k \PA^j(E_1,E_2) \to \PA^{i+j}(E_1,E_3)
      \end{displaymath}
      is defined by componentwise compositions of $\phi$'s.

\end{definition}

Let $A=(\AOD(X),\dbar)$ be the Dolbeault dga of a compact complex manifold $X$. The following theorem of Block (Theorem 4.3., \cite{Block1}) states that the perfect category $\PA$ associated to the Dolbeault dga provides a dg-enhancement of the derived category $\Dcoh(X)$.

\begin{theorem}[Thm 4.3., \cite{Block1}]\label{thm:perfect_Block}
  Let $X$ be a compact complex manifold and $A=(\AOD(X),\dbar)$ its Dolbeault dga. Then the homotopy category $\Ho \PA$ of the dg-category $\PA$ is equivalent to $\Dcoh(X)$, the bounded derived category of complexes of sheaves of $\Osheaf_X$-modules with coherent cohomology.
\end{theorem}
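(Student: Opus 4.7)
The plan is to construct a natural dg-functor $\Phi: \PA \to \Dcoh(X)$ and show that it descends to an equivalence on homotopy categories. Given a cohesive module $E = (E^\bullet, \Econn)$, sheafify $E^\bullet$ to a bounded complex of smooth vector bundles $\Esheaf^\bullet$ on $X$, and form the total complex $\Asheaf^{0,\bullet}_X \otimes_{\Asheaf^{0,0}_X} \Esheaf^\bullet$ equipped with the total differential $\Econn = \Econn^0 + \Econn^1 + \Econn^2 + \cdots$. The components $\Econn^k$ for $k \geq 2$ act as $\Asheaf^{0,0}$-linear perturbations of the leading $(\Econn^0,\Econn^1)$-data of a Dolbeault-type complex, and the integrability condition $\Econn \circ \Econn = 0$ ensures this total differential squares to zero. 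Define $\Phi(E)$ to be the resulting complex of soft sheaves, viewed as an object of $D^b(X)$; a local analysis, comparing with honest holomorphic complexes of vector bundles in the case $\Econn^k = 0$ for $k \geq 2$, shows that its cohomology sheaves are coherent.

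Fully faithfulness reduces to the assertion that $\PA^\bullet(E_1, E_2)$ computes $\RHom^\bullet_{\Osheaf_X}\bigl(\Phi(E_1), \Phi(E_2)\bigr)$. Identify $\PA^\bullet(E_1, E_2)$ with the global sections of the soft complex $\Asheaf^{0,\bullet}_X \otimes \mathcal{H}om(\Esheaf^\bullet_1, \Esheaf^\bullet_2)$ endowed with the total differential induced by $\Econn_1$ and $\Econn_2$. Since soft sheaves are acyclic for global sections, this computes the hypercohomology of $\mathcal{RH}om\bigl(\Phi(E_1), \Phi(E_2)\bigr)$, and a spectral sequence argument filtered by Dolbeault degree produces the required isomorphism at the level of $\Ho \PA$.

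The main obstacle is essential surjectivity, because compact complex manifolds need not admit global resolutions of coherent sheaves by holomorphic vector bundles, so one cannot simply take a holomorphic resolution with all higher $\Econn^k$ vanishing. The remedy is the twisted-resolution construction of Toledo and Tong: on a sufficiently fine Stein cover of $X$, a coherent sheaf $\Fsheaf$ admits local holomorphic resolutions by finite-rank bundles, and these patch together not as an honest complex of holomorphic bundles but through a hierarchy of \v{C}ech cochains of increasing order. These cochains assemble precisely into the higher components $\Econn^k: E^\bullet \to \A^k \otimes_\A E^{\bullet-k+1}$ of a $\ZZ$-connection on a single global graded smooth vector bundle $E^\bullet$, and the full tower of \v{C}ech cocycle identities becomes equivalent to $\Econn \circ \Econn = 0$. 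This produces a cohesive module $E_\Fsheaf$ with $\Phi(E_\Fsheaf) \simeq \Fsheaf$; standard mapping-cone and truncation constructions inside $\PA$ then extend the correspondence to arbitrary bounded complexes with coherent cohomology, completing essential surjectivity.
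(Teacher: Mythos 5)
This theorem is not proved in the paper at all: it is cited verbatim as Theorem~4.3 of Block's work \cite{Block1} and used as a black box, so there is no internal argument against which to compare your proposal. What follows is therefore a comparison with Block's own proof rather than with anything in the present paper.

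Your outline does track the structure of Block's argument reasonably faithfully: the functor to $\Dcoh(X)$ is built by forming $\Asheaf^{0,\bullet}_X \otimes_{\Asheaf^{0,0}_X} \Esheaf^\bullet$ with the total differential $\Econn$; full faithfulness comes from softness of the resulting complexes together with a spectral sequence filtered by Dolbeault degree; and essential surjectivity is precisely where Toledo--Tong twisting cochains enter, since a compact complex manifold need not carry global holomorphic resolutions of coherent sheaves. Two points in your sketch are compressed more than they should be. First, the passage from a Toledo--Tong \v{C}ech-level twisted resolution on a Stein cover to a single global graded $C^\infty$ bundle $E^\bullet$ with a $\ZZ$-connection is not automatic: one must fix a partition of unity subordinate to the cover and use it to glue the local holomorphic bundles and the tower of higher \v{C}ech homotopies into global smooth data, and it is exactly this smoothing step that manufactures the components $\Econn^k$ for $k \geq 2$. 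Second, the claim that the cohomology sheaves of $\Phi(E)$ are coherent for an \emph{arbitrary} cohesive module $E$ is one of the delicate points of Block's proof; it requires a local gauge transformation reducing to the case $\Econn^k = 0$ for $k \geq 2$ together with a finiteness argument, and ``a local analysis'' does not yet constitute a proof. Neither issue is conceptually misguided, but both require genuine work before the sketch becomes a proof, and neither is addressed by the paper you are reading, which simply defers to Block.
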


In \cite{DolbeaultDGA} we have generlized Block's result to the case of formal neighborhoods.

\begin{theorem}[Thm 3.16, \cite{DolbeaultDGA}]\label{thm:perfect_Yu}
Let $i:X \hookrightarrow Y$ be a closed embedding of complex manifolds with $X$ compact. Let $A=(\Adot(\Yhat),\dbar)$ be the Dolbeault dga of the formal neighborhood $\Yhat$, then the homotopy category $\Ho \PA$ of the dg-category $\PA$ is equivalent to $\Dcoh(\Yhat)$, the bounded derived category of complexes of sheaves of $\Osheaf_{\Yhat}$-modules with coherent cohomology.
\end{theorem}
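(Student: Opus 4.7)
The plan is to adapt Block's proof of Theorem \ref{thm:perfect_Block} to the formal neighborhood setting. The essential new input, already established in \cite{DolbeaultDGA}, is that the sheafification $\Asheaf^\bullet_{\Yhat}$ of the Dolbeault dga provides a soft resolution of $\Osheaf_{\Yhat}$ on the compact base $X$, and locally $\Asheaf^\bullet_{\Yhat} \simeq \AOD_X(\Shat(\conormal))$. Thus although $\Yhat$ is a non-reduced formal scheme, the softness of its Dolbeault resolution on the compact manifold $X$ makes Block's constructions go through after suitable modification.

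First, I would construct a comparison dg-functor $\Phi: \PA \to \Dcoh(\Yhat)$. Given a cohesive module $(\Edot, \Econn)$, the underlying finitely generated projective $\A^0$-module $\Edot$ sheafifies to a bounded complex of soft sheaves of $\Asheaf^0_{\Yhat}$-modules; the integrability $\Econn \circ \Econn = 0$ makes the totalization of $\Asheaf^\bullet_{\Yhat} \otimes_{\Asheaf^0_{\Yhat}} \Edot$ into a complex of soft $\Osheaf_{\Yhat}$-modules, which represents $\Phi(E)$ in $\Dcoh(\Yhat)$. Coherence of the cohomology sheaves is a local question that reduces via the isomorphism $\Asheaf^\bullet_{\Yhat} \simeq \AOD_X(\Shat(\conormal))$ to the manifold case handled by Block.

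Essential surjectivity is the hard step. Every object of $\Dcoh(\Yhat)$ is quasi-isomorphic to a bounded complex of coherent sheaves, and using the $\Isheaf$-adic filtration one can reduce to a bounded complex of coherent $\Osheaf_X$-modules, which by Block's theorem on $X$ is resolved by a finite complex of smooth vector bundles with a $\ZZ$-connection $\Econn^0 + \Econn^1$. The higher components $\Econn^k$ for $k \geq 2$ must then be constructed by induction on the order of $\Isheaf$ in order to encode the non-trivial extension data; each step amounts to solving a $\dbar$-cohomological equation on $X$ whose obstruction vanishes thanks to the integrability condition achieved at the previous order. The main obstacle is to ensure that these successive lifts assemble into a single convergent $\ZZ$-connection with $\Econn \circ \Econn = 0$ to all orders, which requires careful homotopy bookkeeping and is the genuine technical heart of the proof.

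Finally, for fully faithfulness, the Dolbeault-degree filtration on the Hom complex $\PA^\bullet(E_1, E_2)$ yields a spectral sequence whose $E_1$-page computes the global sections of a soft resolution of $\RHoms_{\Osheaf_{\Yhat}}(\Phi E_1, \Phi E_2)$; compactness of $X$ together with softness gives convergence to $\operatorname{Ext}^\bullet_{\Dcoh(\Yhat)}(\Phi E_1, \Phi E_2)$, parallel to Block's argument on $X$ now applied fibrewise along the completed normal symmetric algebra.
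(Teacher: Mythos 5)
This theorem is quoted verbatim from Theorem~3.16 of~\cite{DolbeaultDGA}; the present paper does not reprove it, so there is no in-paper argument to compare against. Judged on its own merits, your proposal reproduces the expected architecture (comparison functor, fully faithfulness via a spectral sequence over the soft Dolbeault resolution, essential surjectivity), and the fully faithfulness sketch is plausible. But the essential surjectivity step contains a genuine gap.

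You claim that ``using the $\Isheaf$-adic filtration one can reduce to a bounded complex of coherent $\Osheaf_X$-modules,'' and then build the cohesive module by induction on the order of $\Isheaf$. This does not work as stated. A coherent $\Osheaf_{\Yhat}$-module has an $\Isheaf$-adic filtration that is exhaustive only in the completed sense: the structure sheaf $\Osheaf_{\Yhat}$ itself is a nontrivial example whose filtration has infinitely many nonzero graded pieces $S^k N^\vee$, so the associated graded is not a bounded complex of $\Osheaf_X$-modules and one cannot simply invoke Block's theorem on $X$. You would be replacing $\Dcoh(\Yhat)$ by a filtered/pro-version of $\Dcoh(X)$, and that equivalence has to be established, not assumed. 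Moreover, the induction you propose conflates two distinct gradings: the index $k$ on the components $\Econn^k$ of a $\ZZ$-connection is the Dolbeault degree (which is bounded by $\dim X$), whereas the $\Isheaf$-adic order is the degree in $\Shat(N^\vee)$ (which is unbounded). Each $\Econn^k$ receives contributions from all $\Isheaf$-orders simultaneously, so ``building $\Econn^k$ for $k \ge 2$ by induction on the order of $\Isheaf$'' mixes the two filtrations in a way that does not yield a well-posed recursion, and the ``convergent $\ZZ$-connection'' you flag as the technical heart is precisely the point that needs an actual argument (using the topology on $\A^0(\Yhat) \simeq \AOO_X(\Shat N^\vee)$ and a genuine completion statement). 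Without resolving this, essential surjectivity is not established.
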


Note that even when the submanifold $X$ is noncompact, there is still a fully faithful functor $\alpha: \Ho \PA \to \Dcoh(\Yhat)$ by Lemma 3.17, \cite{DolbeaultDGA}. We construct in \S~\ref{sec:Koszul} a cohesive module $(K^\bullet,\Kconn)$ over $\Yhat$ for the (derived) direct image $\hat{i}_*\Osheaf$ and study its endomorphism complex in the perfect category $\PA$, whose cohomology is quasi-isomorphic to 
  \[  \RHom^\bullet_{\Osheaf_{\Yhat}} (\hat{i}_* \Osheaf_X, \hat{i}_* \Osheaf_X). \]
We do not need the assumption that $X$ is compact. One important observation is that
  \[  \RHom^\bullet_{\Osheaf_{\Yhat}} (\hat{i}_* \Osheaf_X, \hat{i}_* \Osheaf_X) \simeq  \RHom^\bullet_{\Osheaf_Y } (i_* \Osheaf_X, i_* \Osheaf_X). \]

\section{Koszul resolution on formal neighborhood}\label{sec:Koszul}

In this section, we will build a cohesive module $(K^\bullet, \Kconn)$ over the Dolbeault dga $\A^\bullet(\Yhat)$ which corresponds to $\hat{i}_* \Osheaf_X$. We will compute the low order terms of the connection $\Kconn$ using the description of the Dolbeault dga in \S~\ref{subsec:DolbeaultDGA}.  We will study construct contractions from the total complex of the cohesive module to the Dolbeault complex $(\A^{0,\bullet}(X), \dbar)$ of $X$, i.e., the Dolbeault dga $A_X=(\A^{0,\bullet}(X), \dbar)$. For convenience, we sometimes write the Dolbeault dga $\AOD(X)$ as $\A^\bullet_X$ and $\A^\bullet(\Yhat)$ as $\A^\bullet_{\Yhat}$. 


From now on, we assume there exists a splitting $\sigma$ of the normal short exact sequence \eqref{seq:normal} and we fix an isomorphism $ (\A^\bullet(\Yhat), \dbar) \simeq (\AOD_X(\Shat(N^\vee)), \Dnormal)$ with the differential $\Dnormal$ as in \eqref{eq:Dnormal}.The underlying isomorphism of graded algebras can be thought of as a smooth diffeomorphism between $\Yhat$ and the formal neighborhood $X\formal_N$ of $X$ as the zero section inside the total space of its normal bundle $N$, which of course does not identify the holomorphic structures in general. We set $A_N = (\A^\bullet_N, \dbar) = (\AOD_X(\Shat(N^\vee)),\dbar)$ to be the Dolbeault dga of $X\formal_N$, where $\dbar$ is the usual $\dbar$-derivation induced by the holomorphic structure of $N^\vee$. We have an injective homomorphism $\AOD(X) \to \A^\bullet_N$ of dgas, which corresponds the the contraction from total space of $N$ to its zero section.
 
Let $(L^\bullet,d_L) = (\A^{0,\bullet}_X(N), \dbar)$. We globalize the construction in \S~\ref{subsec:Koszul_complex} and adopt the same notations to obtain the Koszul complex
  \[ K^{-i} =  \A^0_N  \otimes_{\A^0_X} \wedge^i_{\A^0_X} \Ldual^0 = \A^{0,0}_X (\Shat(\conormal) \otimes \wedge^{i} \conormal), \quad 0 \leq i \leq d ,\] 
where $d$ is the codimsion of $X$ in $Y$, and $K^{-i}=0$ otherwise. Here $\otimes_{\A^0_X}$ means tensor over the algebra $\A^{0,0}(X)$ and the exterior and symmetric powers are similar. The total complex is then
\[ 
   \Ktot:= \A^\bullet_N \otimes_{\A^0} K^{\bullet}   \simeq \Shat (\Ldual^\bullet)  \otimes_{\A_X} \wedge^\bullet_{\A_X}(\Ldual^\bullet),
\]   
where $\otimes_{\A_X}$ means tensor over $\AOD(X)$. This can also be regarded a bicomplex with bidegree $(i,j)$, where $i$ comes from the exterior power of the conormal bundle, while $j$ comes from the grading of $\Ldual^\bullet$. That is, the $(i,j)$-component is $K^{i,j}_{Tot} =  \A^j_N \otimes_{\A^0_X} \Shat(\Ldual^0) \otimes_{\A^0_X}  \wedge^{-i}_{\A^0_X} \Ldual^0$.  
   
Regard $K^\bullet$ as a graded $\A^0_X$-algebra and so the Koszul differential $d_K: K^{-i} \to K^{-i+1}$ is an $\A^0_X$-linear derivation of degree $1$. Similar we have an operator $P_K$ of degree $-1$. Let $\Kconn^0=d_K$ and let $\Kconn^1 = \dbar$ be the $\dbar$-connection on $S^\bullet \otimes \wedge^\bullet$ induced by that of $\conormal$. Note that $\Kconn^1$ commutes with $P_K$. Set $\widetilde{\Kconn}=\Kconn^0 + \widetilde{\Kconn}^1$. We have $\widetilde{\Kconn}^2=0$, so $(K^\bullet, \widetilde{\Kconn})$ is a cohesive module over $\A^\bullet_N$ and $(\Ktot, \widetilde{\Kconn})$ forms a cochain complex. 

There are also global versions of $\pi_K$ and $i_K$: the projection $\pi_K: \Ktot \to \A^{0,\bullet}(X)$ such that its restriction on $\A^\bullet_N \otimes_{\A^0} K^0  \subset \Ktot$ is the natural map $\A^\bullet_N \to \A^{0,\bullet}(X)$ and zero elsewhere; the inclusion $i_K$ is the composition $\A^{0,\bullet}(X) \to \A^\bullet_N = \A^\bullet_N \otimes_{\A^0_N} K^0 \hookrightarrow \Ktot$. Then we have an analogue of Lemma \ref{lemma:Koszul}.

\begin{lemma}\label{lemma:Koszul_global}
  \begin{enumerate}[1)]
    \item
      $P_K^2 = 0$.
    \item
      $\pi_K i_K = \Id_{\AOD(X)}$, $\Kconn^0 P_K + P_K \Kconn^0 = \Id_{\Ktot} - i_K \circ \pi_K$, $\widetilde{\Kconn}^1 P_K + P_K \widetilde{\Kconn}^1  = 0$.
    \item
      $(\pi_K, i_K, P_K)$ defines a contraction from $(\Ktot, \widetilde{\Kconn})$ to $(\A^{0,\bullet}(X),\dbar)$.
  \end{enumerate}
\end{lemma}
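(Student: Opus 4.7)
The plan is to reduce parts 1) and most of 2) to the pointwise statement of Lemma~\ref{lemma:Koszul}, and then establish the single new identity $\widetilde{\Kconn}^1 P_K + P_K \widetilde{\Kconn}^1 = 0$ by hand.

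First I would note that $\Kconn^0 = d_K$, $P_K$, $\pi_K$, and $i_K$ are defined fiberwise by the same formulas as in \S~\ref{subsec:Koszul_complex}, applied to $V = N_x$ at each $x \in X$, and then extended $\A^{0,\bullet}_X$-linearly to sections on $\Ktot = \A^{0,\bullet}_X(\Shat(\conormal) \otimes \wedge^\bullet \conormal)$. Consequently, the identities $P_K^2 = 0$, $\pi_K i_K = \Id_{\A^{0,\bullet}(X)}$, and $\Kconn^0 P_K + P_K \Kconn^0 = \Id_{\Ktot} - i_K \circ \pi_K$ follow immediately from the corresponding parts of Lemma~\ref{lemma:Koszul}, with the algebra $\A^{0,\bullet}(X)$ playing the role that the ground field $\complex$ played there.

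The substantive new content is the identity $\widetilde{\Kconn}^1 P_K + P_K \widetilde{\Kconn}^1 = 0$. The conceptual point is that $P_K$ is assembled from purely holomorphic ingredients (multiplication in $\Shat(\conormal)$, the wedge product on $\wedge^\bullet \conormal$, and the identity identifying $V^\vee$ inside the symmetric and the exterior parts), whereas $\widetilde{\Kconn}^1 = \dbar$ is the derivation induced by the holomorphic structure on $\conormal$. I would verify the identity on elements of the form $\omega \otimes s$, where $\omega \in \A^{0,\bullet}(X)$ and $s$ is a local holomorphic section of $\Shat(\conormal) \otimes \wedge^\bullet \conormal$. Since $P_K$ preserves holomorphicity of $s$ and $\widetilde{\Kconn}^1$ annihilates holomorphic sections, a short Koszul-sign computation yields
\[ \widetilde{\Kconn}^1 P_K(\omega \otimes s) + P_K \widetilde{\Kconn}^1(\omega \otimes s) = (-1)^{|\omega|}\, \dbar\omega \otimes P_K(s) + (-1)^{|\omega|+1}\, \dbar\omega \otimes P_K(s) = 0. \]
As such elements generate $\Ktot$ over $\A^{0,0}_X$, the identity propagates to all of $\Ktot$.

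With all three homotopy identities in hand, part 3) is automatic: summing them gives $\widetilde{\Kconn} P_K + P_K \widetilde{\Kconn} = \Id_{\Ktot} - i_K \circ \pi_K$, and $\pi_K, i_K$ intertwine $\dbar$ by construction (they are built from natural projections and inclusions of holomorphic bundles), so $(\pi_K, i_K, P_K)$ is the claimed contraction from $(\Ktot, \widetilde{\Kconn})$ to $(\A^{0,\bullet}(X), \dbar)$. The main obstacle, such as it is, is the sign bookkeeping in the anti-commutativity check: while the intuition (holomorphic operations commute with $\dbar$) is clear, one has to be careful with the Koszul sign convention governing how operators of odd degree pass across tensor factors in order to obtain a true cancellation rather than a doubling.
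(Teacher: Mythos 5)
The paper states Lemma~\ref{lemma:Koszul_global} with no proof, just as an ``analogue of Lemma~\ref{lemma:Koszul},'' so there is nothing to compare against line-by-line; your argument supplies exactly the content the paper implicitly defers. Your reduction of parts~1) and the first two identities of part~2) to the pointwise Lemma~\ref{lemma:Koszul} applied to $V=N_x$, with $\A^{0,\bullet}(X)$ in the role of the ground field, is correct, since $d_K$, $P_K$, $\pi_K$, $i_K$ are all fiberwise-defined bundle maps. The only genuinely new content is the anticommutation $\widetilde{\Kconn}^1 P_K + P_K \widetilde{\Kconn}^1 = 0$, and your sign bookkeeping is right: taking $P_K$ as an operator of degree~$-1$ in the dg-category convention of Definition~\ref{defn:cohesive} (so $P_K(\omega\otimes s)=(-1)^{|\omega|}\omega\otimes P_K(s)$), and using that $P_K$ preserves local holomorphic sections of $\Shat(N^\vee)\otimes\wedge^\bullet N^\vee$ (since it is built from fiberwise holomorphic-linear operations and a division by a positive integer), the two terms cancel with opposite signs. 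One small remark worth making explicit in your write-up: the paper's inline comment that ``$\Kconn^1$ commutes with $P_K$'' should be read as \emph{graded}-commutation between two odd-degree operators, which is precisely the anticommutation you verify; a reader taking ``commutes'' literally would find the lemma and your computation in apparent conflict, whereas with the $\PA$-grading convention they agree. With those three homotopy identities in hand, part~3) follows by adding them, as you say.
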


Hence the cohesive module $(K^\bullet, \widetilde{\Kconn})$ resolves the direct image of $\Osheaf_X$ under the zero section embedding $X \hookrightarrow N$. Again like in \S~\ref{subsec:Koszul_complex}, we can define the dual complex $\Kd^\bullet$ and form the dga $\End^\bullet_{\A_N}(\Ktot) = \Hom^\bullet_{\A^\bullet_N}(\Ktot,\Ktot)$, where
\begin{equation}\label{eq:Hom_K}
  \begin{split}
    \Hom^k_{\A_N}(\Ktot,\Ktot)
      &= \bigoplus_{p,q} \Hom_{\A}(K^{-p}, \A^{q}_N \otimes_{\A_N} K^{k-p-q}) \\
      &= \bigoplus_{p,q} \A^{q}_N \otimes_{\A_N} K^{k-p-q} \otimes_{\A_N} \Kd^{p} \\
      &= \bigoplus_{p,q} \A^q_X(\Shat(\conormal) \otimes \wedge^{p+q-k} \conormal \otimes \wedge^{p} \normal).
  \end{split}
\end{equation}
For any $f \in \A^q_X(\Shat(\conormal) \otimes \wedge^{p+q-k} \conormal \otimes \wedge^{p} \normal)$, we have
\begin{equation}\label{eq:d_Hom}
  \begin{split}
    \tilde{d}_{\Hom} (f)
      &= \dbar f + d_K f + (-1)^{k+1} f d_K \\
      &= \dbar f + d_K f + (-1)^{p-k} d_{\Kd} f .
  \end{split}
\end{equation}

There are two different contractions $(\tilde{\pi}_T, \tilde{i}_T = i_H, \tilde{P}_T)$ and $(\tilde{\pi}_{GV}, \tilde{i}_{GV}=i_H, \tilde{P}_{GV})$ defined in the way as in \S~\ref{subsec:Koszul_complex} from $\End^\bullet_{\A_N}(\Ktot)$ to $(\AOD_X(\wedge^\bullet N), \dbar)$ which satisfy same properties as in Lemma \ref{lemma:Hom_contraction}. If we regard $\End^\bullet$ as a (noncanonical) module over the graded algebra $\AOD(X)$, all maps in the triples here are $\AOD(X)$-linear.

Now we want to make $K^\bullet$ into a cohesive module over $\A^\bullet(\Yhat)$. $K^\bullet$ is already a module over $\A^0(\Yhat)$ under the identification $\A^0(\Yhat) \simeq \A^{0,0}_X(\Shat(N^\vee))$. What we should do is to perturb the connection $\widetilde{\Kconn}$ to make it compatible with the differential on $\A^\bullet(\Yhat)$. The complex $(K^\bullet, \Kconn^0)$ is homotopy equivalent of $\A^0(X)$ as $\A^0(\Yhat)$-modules. Hence by Theorem 3.2.7, \cite{Block1}, there exists a $\ZZ$-connection $\Kconn$ on $\Ktot$ with the $0$-th component being $\Kconn^0$. With the explicit homotopy operator $P_K$ in hand, we will be able to describe part of the connection $\Kconn$ in terms of $\mathfrak{D}$ which will be enough for proving our main result.

We now define components of $\Kconn$ recursively. Since $K^\bullet_{tot}$ is generated by $K^0$ and $K^{-1}$ as a graded algebra, we only need to specify the value of $\Kconn$ on these two components. We have to set $\Kconn$ to be the differential on $K^0 = \A^\bullet(\Yhat)$, which is $\Dnormal$ under the identification $\A^\bullet(\Yhat) \simeq \AOD_X(\Shat(N^\vee))$, since $\Kconn$ is a $\dbar$-derivation. To determine the value of $\Kconn$ on $K^{-1}$, we start with $\Kconn^1$ which is a $\dbar$-derivation. Over $K^{-1}$, we have to choose $\Kconn^1$ so that the squares in the following diagram anti-commute:
\begin{diagram}
&K^{-d} &\xrightarrow{\Kconn^0} &K^{-d+1} &\xrightarrow{\Kconn^0} &\cdots &\xrightarrow{\Kconn^0} &K^{-1} &\xrightarrow{\Kconn^0} &\A^0_\Yhat &\to 0 \\
&\dTo_{\Kconn^1}  & &\dTo_{\Kconn^1}   &  &\cdots  &  &\dTo_{\Kconn^1}   &  &\dTo_{\dbar} \\
&\A^1_{\Yhat} \otimes_{\A^0_\Yhat} K^{-d}  &\to &  \A^1_{\Yhat} \otimes_{\A^0_{\Yhat}} K^{-d+1} &\to &\cdots &\to & \A^1_\Yhat \otimes_{\A^0_\Yhat} K^{-1}  &\to &\A^1_\Yhat &\to 0,
\end{diagram}
i.e.,
\begin{equation}\label{eq:Kconn_anticomm}
  \Kconn^0 \Kconn^1 + \Kconn^1 \Kconn^0 = 0.
\end{equation}

For this purpose, we only need to specify the value of $\Kconn^1$ on $\A^0_X(S^0 N^\vee \otimes \wedge^1 N^\vee) \subset K^{-1}$. We set
  \[  \Kconn^1: \A^0_X (S^0 N^\vee \otimes \wedge^1 N^\vee)  \to \A^1_X (\Shat(N^\vee) \otimes \wedge^1 N^\vee) \]
to be
  \begin{equation}\label{eq:Kconn1}
    \Kconn^1 =\dbar + \sum_{k \geq 2} \Rbar^\bot_k + \sum_{k \geq 1} B_k \circ \nabla^\bot. 
  \end{equation} 
This looks similar to \eqref{eq:Dnormal}, yet the tensors with overlines have different interpretations: $\Rbar^\bot_k = P_K (R^\bot_k)$ so that
  \[ \Rbar^\bot_k \in \A^1_X( S^{k-1} N^\vee \otimes \wedge^1 N^\vee \otimes N). \]
In particular, regarded as an element in $\A^1_X(N^\vee \otimes N^\vee \otimes N)$,
\begin{equation}\label{eq:Rbar2}
  \Rbar^\bot_2 = \frac{1}{2} R^\bot_2.
\end{equation}
and we regard
  \[ B_k \circ \nabla^\bot : \A^0_X (S^0 N^\vee \otimes \wedge^1 N^\vee) \to \A^1_X(S^k N^\vee \otimes \wedge^1 N^\vee)  \]
 by composing $B_k$ with the $T^*X$-component produced by $\nabla^\bot$ yet we do not symmetrize the $S^k N^\vee$-component with the $N^\vee$-component produced by $\nabla^\bot$ as contrast to \eqref{eq:Dnormal}. We can check that such defined $\Kconn^1$ satisfies \eqref{eq:Kconn_anticomm}. 

Now suppose we have constructed components $\Kconn^k$ for $k \leq n$ satisfying
\begin{displaymath}
  \sum_{i=0}^{k} \Kconn^i \Kconn^{k-i} = 0, \quad \forall ~ k \leq n.
\end{displaymath}
Let
\begin{displaymath}
  D = - \sum_{i=1}^{n} \Kconn^{i} \Kconn^{n+1-i},
\end{displaymath}
then $D \in \Hom^2_{\A}(\Ktot,\Ktot)$ and $d_{\Hom}(D) = 0$. In fact, $D$ lies in the subcomplex $\Der^\bullet_{\A}(\Ktot) \subset \End^\bullet_\A(\Ktot)$. Set
\begin{equation}\label{eq:Kconn_recursion}
  \Kconn^{n+1} = P_T(D) = P_T(-\sum_i \Kconn^{i} \Kconn^{n+1-i})\in \Der^1_{\A^\bullet(\Yhat)}(\Ktot,\Ktot)
\end{equation}
(cf. Lemma \ref{lemma:der}). Then by 2), Lemma \ref{lemma:Koszul_global},
\begin{displaymath}
  \sum_{i=0}^{n+1} \Kconn^{i} \Kconn^{n-i} = 0.
\end{displaymath}
Since the complex $\Ktot$ is bounded in both degrees, this process finally stops within finite steps and we get a $\ZZ$-connection $\Kconn = \sum_{i \geq 0} \Kconn^i$ satisfying the integrability condition $\Kconn ^2 = 0$. Thus $(K^\bullet, \Kconn)$ is a cohesive module over $\A^\bullet(\Yhat)$.

If we only care about  terms in $\Kconn^1$ of order less or equal than $2$, we can write
  \begin{equation}\label{eq:Kconn_2nd}
   \Kconn^1 =  \dbar + \Rbar^\bot_2 + \Rbar^\bot_3 + R^\top_2 \circ \nabla^\bot + \text{terms of order } \geq 2. 
   \end{equation}
Then 
  \begin{equation}\label{eq:Kconn_sqr}
    \begin{split}
     (\Kconn^1)^2 
       & = \dbar^2 + [\dbar, \Rbar^\bot_2] + \widetilde{R}^\bot_2 \circ \Rbar^\bot_2 + \Rbar^\bot_2 \circ \Rbar^\bot_2 + [\dbar, \Rbar^\bot_3] + R^\top_2 \circ [\dbar, \nabla^\bot] + \text{terms of order } \geq 2  \\
       &= \widetilde{R}^\bot_2 \circ \Rbar^\bot_2 + \Rbar^\bot_2 \circ \Rbar^\bot_2 + \dbar \Rbar^\bot_3 + R^\top_2 \circ R_N + \text{terms of order } \geq 2.
    \end{split}
  \end{equation}
We explain terms in Eq \eqref{eq:Kconn_sqr} as follows. The first term 
  \[ \widetilde{R}^\bot_2 \circ \Rbar^\bot_2 \in \A^{0,2}_X(S^2 N^\vee \otimes \wedge^1 N^\vee \otimes N) \]
is given by contracting the $N$-component of $R^\bot_2 \in \A^{0,1}_X(S^2 N^\vee \otimes N)$ with the $S^1 N^\vee$-component of $\Rbar^\bot_2 \in \A^{0,1}_X(S^1 N^\vee \otimes \wedge^1 N^\vee \otimes N)$.

The second term $\Rbar^\bot_2 \circ \Rbar^\bot_2$ in Eq \eqref{eq:Kconn_sqr} is given by contracting the $N$-component of $\Rbar^\bot_2$ with the $\wedge^1 N^\vee$-component of $\Rbar^\bot_2$ and multiplying the two $S^1 N^\vee$ from both $\Rbar^\bot_2$ to get the $S^2 N^\vee$-component.

In the fourth term, $R_N=[\dbar, \nabla^\bot] \in \A^{0,1}_X(T^*X \otimes \End(N^\vee))$ is a representative of the Atiyah class $\alpha_{N^\vee}$ of $N^\vee$. The tensor $R_2^\top \circ R_N$ defines a cohomology class
  \begin{equation}\label{eq:gamma_class}
    \gamma_\sigma = [R_2^\top \circ R_N] \in H^2(X, (N^\vee)^{\otimes 3} \otimes N).
  \end{equation}
From now on, we assume $\gamma_\sigma = 0$. Note that $\gamma_\sigma$ depends on the splitting $\sigma$. The geometric meaning of $\gamma_\sigma$ will be explained in \ref{sec:gamma}.

We illustrate Eq \eqref{eq:Kconn_sqr} using binary tree diagram. The convention is as follows: a hollow circle node stands for an $N^\vee$ in the symmetric part, a hollow square node stands for an $N^\vee$ in the antisymmetric part, a solid node, either circular or squared, means it carries a $\A^{0,1}_X$-part (in front of $N^\vee$); a hollow circle has degree $0$, while a solid circle has degree $1$. A hollow square has degree $-1$, while a solid square has degree equal to $1 - 1 = 0$; one branch of a tree stands for a copy of $R^\bot_2$ and a tree expresses compositions of a series of copies of $R^\bot_2$. More explicitly, we have

\begin{figure}[h]
   \centering
   \pstree{\Tc{5pt}}
         {
           \Tc{5pt}
           \Tc*{5pt}
         }
   \rput[l](0.5,-1){$=$}
   \hspace{1cm}
   \pstree{\Tc{5pt}}
         {
           \Tc*{5pt}
           \Tc{5pt}
         }
   \rput(1,-1){$= \quad \widetilde{R}^\bot_2$}
   \caption{Tree diagram for $\widetilde{R}^\bot_2$}
\end{figure}

\begin{figure}[h]
   \centering
   \rput[l](0,-1){$\displaystyle \frac{1}{2} ~ \times$}
   \hspace{0.8cm}
   \pstree{\Tf}
         {
           \Tc{5pt}
           \Tf*
         }
   \rput[l](0.5,-1){$\displaystyle = \quad \frac{1}{2} ~ \times$}
   \hspace{2cm}
   \pstree{\Tf}
         {
           \Tc*{5pt}
           \Tf
         }
   \rput(1.5,-1){$\displaystyle  =  \frac{1}{2}R^\bot_2  =  \Rbar^\bot_2$}
   \caption{Tree diagram for $\Rbar^\bot_2$}
   \label{fig:Kconn2}
\end{figure}

We can express low order terms $\widetilde{R}^\bot_2 \circ \Rbar^\bot_2 + \Rbar^\bot_2 \circ \Rbar^\bot_2 $ in the expression \eqref{eq:Kconn_sqr} of $(\Kconn^1)^2$ in terms of tree diagram (the other two terms are $\dbar$-exact) as illustrated in Fig. \ref{fig:Kconn_sqr}.

\begin{figure}[h]\label{eq:tree2}
   \centering
   \rput[l](0,-2){$\displaystyle \widetilde{R}^\bot_2 \circ \Rbar^\bot_2 + \Rbar^\bot_2 \circ \Rbar^\bot_2  ~ = ~ \frac{1}{2} ~\times$}
   \hspace{4.5cm}
   \pstree{\Tf}
         {
           \pstree{\Tc{5pt}}{\Tc{5pt} \Tc*{5pt}}
           \Tf*
         }
   \rput[l](0.5,-2){$\displaystyle - \quad \frac{1}{2} \times \frac{1}{2} \times$}
   \hspace{2.8cm}
   \pstree{\Tf}
         {
           \Tc*{5pt}
           \pstree{\Tf}{\Tc{5pt} \Tf*}
         }          
   \caption{Tree diagram for $(\Kconn^1)^2$}
   \label{fig:Kconn_sqr}
\end{figure}

Now apply $P_T$ to $-(\Kconn^1)^2$. Note that $P_T$ commutes with $\dbar$, so the images of $\dbar$-exact forms under $P_K$ are still $\dbar$-exact and hence we only need to compute $P_K (-\widetilde{R}^\bot_2 \circ \Rbar^\bot_2)$. This amounts to changing the circular leaves in Fig. \ref{fig:Kconn_sqr} to squared leaves (and do the same to the parent nodes) once at a time and then dividing the trees by total number of the leaves. Notice that if both leaves of a branch are squared, the tree corresponds to the zero tensor since squared leaves are anti-symmetric while the $N^\vee$-part of $R^\bot_2$ is symmetric. The computation is illustrated in Fig. \ref{fig:Kconn_sqrD}. In the second equality we simply switched the solid squared leaf on the first level to the right and since it has degree $0$, this process does not generate a minus sign. 

\begin{figure}[h]
   \centering
   \psset{unit=0.8}
   \rput[l](0,-2){$\displaystyle P_K(-\widetilde{R}^\bot_2 \circ \Rbar^\bot_2 -  \Rbar^\bot_2 \circ \Rbar^\bot_2) ~ = ~ \frac{1}{3} \times \frac{1}{2} ~\times$}
   \hspace{6.5cm}
   \psscalebox{\treescale}{
   \pstree{\Tf}
         {
           \pstree{\Tf}{\Tc{5pt} \Tf*}
           \Tf*
         }
   }
   \rput[l](0.5,-2){$\displaystyle - \quad \frac{1}{3} \times \frac{1}{2} \times \frac{1}{2} \times$}
   \hspace{3.5cm}
   \psscalebox{\treescale}{
   \pstree{\Tf}
         {
             \Tf*
           \pstree{\Tf}{\Tc{5pt}  \Tn \Tn  \Tf* \Tn}
         }      
   }    
   
   \vspace{0.5cm}
   \rput[l](2,-2){$\displaystyle   = ~ \frac{1}{6} ~  \times$}
   \hspace{3.2cm}
   \psscalebox{\treescale}{
   \pstree{\Tf}
         {
           \pstree{\Tf}{\Tc{5pt} \Tf*}
           \Tf*
         }
   }
   \rput[l](0.5,-2){$\displaystyle - \quad \frac{1}{12} ~ \times$}
   \hspace{2.2cm}
   \psscalebox{\treescale}{
   \pstree{\Tf}
         {
           \pstree{\Tf}{\Tc{5pt}  \Tf*}
           \Tf*
         }   
   }       

   \vspace{0.5cm}
   
   \hspace{-1cm}
   \rput[l](-2,-2){$\displaystyle   = ~ \frac{1}{12} ~  \times$}
   \psscalebox{\treescale}{
   \pstree{\Tf}
         {
           \pstree{\Tf}{\Tc{5pt} \Tf*}
           \Tf*
         }
   }
    \caption{Tree diagram for $\Kconn^2$}
    \label{fig:Kconn_sqrD}
\end{figure}

We see that even though the constant term of $\Kconn^2$ is zero, its first order term is a binary tree with a nontrivial rational coefficient. The binary tree (without the coefficient) as a tensor is 
  \[  \Alt [ (R^\bot_2)^{\otimes 2} ] \in \A^{0,2}_X(\wedge^2 N^\vee \otimes \End(N^\vee)), \]
 where we now regard $R^\bot_2$ as inside $\A^{0,1}_X(N^\vee \otimes \End(N^\vee))$ and
 \[(R^\bot_2)^{\otimes 2} \in  \A^{0,2}_X( (N^\vee)^{\otimes 2} \otimes \End(N^\vee)) \] 
is obtained by tensoring the $N^\vee$-parts and composing the $\End(N^\vee)$-parts. The operator $\Alt: (N^\vee)^{\otimes 2} \to \wedge^2 N^\vee$ is the anti-symmetrization map $v_1 \otimes v_2 \mapsto v_1 \wedge v_2$. Since a solid squared leaf corresponds to a $\A^{0,1}_X$-form together with a `fermionic' $N^\vee$, hence having degree $0$, the solid squared leaves commute with each other, so no minus sign will arise if we reorder them. The only circular leaf corresponds to the codomain of $\End(N^\vee)$. In general, any such binary tree with only left subtrees which has $k+1$ levels, one circular leaf and $k$ squared leaves represents the tensor
  \[  \Alt \left[ (R^\bot_2)^{\otimes k} \right] \in \A^{0,k}_X(\wedge^k N^\vee \otimes \End(N^\vee)), \]
where
  \[ (R^\bot_2)^{\otimes k} \in \A^{0,k}_X( (N^\vee)^{\otimes k} \otimes \End(N^\vee) ) \]
and 
  \[ \Alt: (N^\vee)^{\otimes k} \to \wedge^k N^\vee, \quad v_1 \otimes \cdots \otimes v_k \mapsto v_1 \wedge \cdots \wedge v_k, \] 
is the anti-symmetrization map. The following theorem shows that the first order term of any $\Kconn^k$ can be described in terms of $\Alt[(R^\bot_2)^{\otimes (k+1)}]$.

\begin{theorem}\label{thm:Kconn}
 Suppose $\gamma_\sigma = 0$. Then for any $k \geq 2$, the component of $\Kconn^k$ in $\A^{0,k}(S^{\leq 1} N^\vee \otimes \wedge^k N^\vee)$ is equal to 
  \begin{equation}\label{eq:Kconn_Bernoulli}
    \frac{B_{k}}{k!} \Alt \left[ (R^\bot_2)^{\otimes k} \right] \quad \text{mod $\dbar$-exact terms},
  \end{equation}
  where $B_k$ is the $k$-th Bernoulli number. In particular, when $k \geq 3$ is odd, $\Kconn^k=0$ on the first order neighborhood. When $k=1$, we have
  \begin{equation}\label{eq:Kconn1}
    \Kconn^1 = \dbar + \widetilde{R}^\bot_2 + \Rbar^\bot_2 = \dbar + \widetilde{R}^\bot_2 + \frac{1}{2} R^\bot_2.
  \end{equation}
  mod terms of order $\geq 2$.
\end{theorem}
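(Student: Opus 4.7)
The plan is to proceed by strong induction on $k$, using the recursive definition \eqref{eq:Kconn_recursion} to express $\Kconn^{k+1}$ in terms of the lower $\Kconn^{j}$, and then verifying that the coefficient of $\Alt[(R^\bot_2)^{\otimes (k+1)}]$ satisfies the recursion characterizing $B_{k+1}/(k+1)!$. The assertion about $k=1$ is essentially a restatement of the construction of $\Kconn^1$ in \eqref{eq:Kconn1} together with \eqref{eq:Rbar2}, and fixes the initial data $c_1 = B_1/1! = 1/2$ under the convention $B_1 = +1/2$. The base case $k=2$ is the explicit calculation already carried out in Figures \ref{fig:Kconn_sqr} and \ref{fig:Kconn_sqrD}, yielding $c_2 = \tfrac{1}{12} = B_2/2!$.

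For the inductive step, assume the statement for all $j \le k$. In the recursion $\Kconn^{k+1} = -P_T\bigl(\sum_{j=1}^{k}\Kconn^j \Kconn^{k+1-j}\bigr)$, the first task is to isolate which products can contribute to the component of $\Kconn^{k+1}$ in $\A^{0,k+1}(S^{\le 1} N^\vee \otimes \wedge^{k+1} N^\vee)$. The derivation property of each $\Kconn^j$, combined with an analysis of the $S^\bullet$-degree shifts under composition, forces us to take only the first-order parts of both factors; all other contributions are either $\dbar$-exact and thus killed by $P_T$, supported in strictly higher $S^\bullet$-degree, or eliminated by the hypothesis $\gamma_\sigma = 0$, which guarantees that the term $R^\top_2 \circ R_N$ from \eqref{eq:Kconn_sqr} does not contaminate the recursion. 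By the inductive hypothesis, each such factor coincides, modulo $\dbar$-exact remainders, with a scalar multiple of the staircase tree $\Alt[(R^\bot_2)^{\otimes j}]$.

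Next I translate the combinatorial step into the tree-diagram calculus of Figures \ref{fig:Kconn2}--\ref{fig:Kconn_sqrD}. Composition of two left-staircase trees of heights $j$ and $k+1-j$ produces a family of grafted trees, and applying $P_T$---which by Lemma \ref{lemma:der} coincides with $P_K$ on derivations---converts the remaining hollow circular leaf to a square leaf. By Lemma \ref{lemma:Koszul}, this $P_K$ step contributes a factor $1/(k+2)$ from the normalization $1/(l+k)$ acting on $S^1 \otimes \wedge^{k+1}$. After antisymmetrization of the $k+1$ square leaves, all grafted trees collapse to the single tensor $\Alt[(R^\bot_2)^{\otimes (k+1)}]$, with a binomial multiplicity $\binom{k+1}{j}$ counting the number of ways of distributing the square leaves between the two subtrees. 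Writing $c_j$ for the coefficient of $\Alt[(R^\bot_2)^{\otimes j}]$ in the first-order part of $\Kconn^j$, the recursion reduces to the single scalar identity
\[
  (k+2)\, c_{k+1} \;=\; -\sum_{j=1}^{k}\binom{k+1}{j}\, c_j\, c_{k+1-j}.
\]
Setting $c_0 := 1$, this is equivalent to the classical Bernoulli recursion $\sum_{j=0}^{n}\binom{n+1}{j} B_j = 0$ for $n \ge 1$, and uniqueness forces $c_j = B_j/j!$. The vanishing of the first-order part of $\Kconn^k$ for odd $k \ge 3$ then follows from the classical fact that $B_k = 0$ for odd $k \ge 3$.

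The main obstacle is the combinatorial and sign bookkeeping at the inductive step: verifying that the grafting of two staircase trees yields exactly the binomial weight $\binom{k+1}{j}$ after antisymmetrization (rather than some close variant), tracking the Koszul signs inherited from \eqref{eq:d_Hom_tensor} through composition and through $P_T$, and ensuring that the normalization $1/(k+2)$ from $P_K$ combines with these to produce the Bernoulli recursion itself. A secondary point, easier but not trivial, is to justify that all contributions outside the staircase-tree shape either lie outside the $S^{\le 1}$ component or are $\dbar$-exact, which uses the cocycle conditions satisfied by $\Dnormal$ together with the integrability of the lower $\Kconn^j$.
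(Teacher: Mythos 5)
Your overall strategy --- induct on $k$, use the recursion \eqref{eq:Kconn_recursion}, show it reduces on the $S^{\le 1}$-component to a scalar recursion matching the Bernoulli numbers --- is exactly the paper's strategy. But the crucial combinatorial step is wrong, and the scalar recursion you write down does not produce the Bernoulli numbers.

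You assert that grafting a staircase tree of height $j$ onto one of height $k+1-j$ and applying $P_T$ produces the tensor $\Alt[(R^\bot_2)^{\otimes(k+1)}]$ with a binomial multiplicity $\binom{k+1}{j}$, giving the recursion $(k+2)\,c_{k+1} = -\sum_{j=1}^k \binom{k+1}{j}\,c_j\,c_{k+1-j}$. This is already inconsistent with the base case: plugging in $c_1 = 1/2$ gives $c_2 = -1/6$, not $B_2/2! = 1/12$. The paper's argument (Figures~\ref{fig:Kconn_comp} and~\ref{fig:Kconn_P}) shows something sharper: although $\Kconn^j$, being a derivation, can be grafted onto any squared leaf of the $\Kconn^{k+1-j}$-tree, applying $P_T$ (equivalently $P_K$, by Lemma~\ref{lemma:der}) kills all of these except the single grafting at the lowest-level leaf. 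There is no binomial multiplicity at all; the surviving term is the $1/(k+2)$ from $P_K$ on $S^1 \otimes \wedge^{k+1}$ times a single tree. The correct recursion is therefore $A_{k+1} = -\frac{1}{k+2}\sum_{i=2}^{k-1}\frac{B_i B_{k+1-i}}{i!(k+1-i)!}$, which rearranges into the quadratic Bernoulli identity of Lemma~\ref{lemma:Bernoulli}, namely $\sum_{i=1}^{n-1}\binom{2n}{2i}B_{2i}B_{2n-2i} = -(2n+1)B_{2n}$. Your claim that the recursion is equivalent to the linear identity $\sum_{j=0}^n\binom{n+1}{j}B_j = 0$ is also misplaced --- the recursion arising here is quadratic, not linear, and they are genuinely different identities.

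A second, smaller gap is that you include the $j=1$ and $j=k$ terms in the sum without comment. These two terms do contribute nontrivially individually, but the paper shows by a separate computation (Eq.~\eqref{eq:Kconn1k} and the surrounding argument) that $P_T(-\Kconn^1\Kconn^k - \Kconn^k\Kconn^1)$ vanishes, so they cancel exactly. Since the $\dbar$-part of $\Kconn^1$ makes its treatment different from that of $\Kconn^j$ with $j\ge 2$, this cancellation needs to be verified explicitly rather than absorbed into the generic case.
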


\begin{proof}
  We argue by induction on $k$. We have checked the case when $k=1$ and $k=2$. Assume the theorem holds for all $\Kconn^i$ with $i \leq k$, $k \geq 2$, we need to show that Eq. \eqref{eq:Kconn_Bernoulli} holds for $k+1$. By the recursive formula \eqref{eq:Kconn_recursion} for $\Kconn^{k+1}$, we only need to determine each $P_T (- \Kconn^i \Kconn^{k+1-i})$. By Lemma \ref{lemma:der}, it is the same as $P_K (- \Kconn^i \Kconn^{k+1-i})$.
  
 First we assume $2 \leq i \leq k-1$. Any composition of $\Kconn^j$ with a $\dbar$-exact term is again $\dbar$-exact, hence by the induction hypothesis $\Kconn^i \Kconn^{k+1-i}$ mod $\dbar$-exact term is equal to sum of compositions of the two trees by identifying the root of the $\Kconn^i$-tree with any of the squared leaves of the $\Kconn^{k+1-i}$-tree. However, the only term which will contribute to $P_T(-\Kconn^i \Kconn^{k+1-i})$ is given by inserting the $\Kconn^i$-tree to the squared leaf on the lowest level of the $\Kconn^{k+1-i}$-tree as illustrated in Fig. \ref{fig:Kconn_comp}. The minus sign appears since we pass the degree $1$ derviation $\Kconn^i$ through the solid circular leaf of $\Kconn^{k+1-i}$ which is of degree $1$.
 
 \begin{figure}[h]
   \centering
   \psset{unit=0.8}
   \rput[l](-3,-5.3){$\displaystyle \Kconn^i \Kconn^{k+1-i} ~ = - \frac{B_i}{i!} \times \frac{B_{k+1-i}}{(k+1-i)!} ~\times$}
   \hspace{3.5cm}
   \psscalebox{0.7}{
   \pstree{\Tf}{
           \pstree{\Tf}{
             \psset{linestyle=dashed}
             \pstree{\Tf[linestyle=solid]}{  
               \psset{linestyle=solid}            
                \Tc*{5pt} \Tn 
                \pstree{\Tf}{                 
                   \pstree{\Tf}{
                      \psset{linestyle=dashed}
                      \pstree{\Tf[linestyle=solid]}{
                        \psset{linestyle=solid}
                        \Tc{5pt}
                        \Tf*
                      }
                      \Tn
                   }
                   \Tf*
                  }
             }
             \Tn 
           }  
           \Tf*
   }
   \psbrace[ref=lC,braceWidth=0.01, nodesepA=0.2](2,-7.5)(2,0){$\Kconn^{k+1-i}$}
   \psbrace[ref=lC,braceWidth=0.01, nodesepA=0.2](2,-15)(2,-7.5){$\Kconn^{i}$}   
}
   \rput[l](2,-5.3){$+$}
   \rput[l](2.8,-5.3){\parbox{2cm}{other trees which will be killed by $P_K$}}
    \caption{The tree diagram for $\Kconn^i \Kconn^{k+1-i}$}
    \label{fig:Kconn_comp}
\end{figure}

Applying $P_K$ to the tree in Fig. \ref{fig:Kconn_comp} changes the solid circular leaf from $K^{k+1-i}$ to a solid squared leaf, which then can be switched to the right child of its parent node, and multiplies the coefficient by $-1/(k+2)$. Hence we have
\begin{equation}
  P_T(-\Kconn^i \Kconn^{k+1-i}) = - \frac{B_i B_{k+1-i}}{(k+2) i! (k+1-i)!} \Alt \left[ (R^\bot_2)^{\otimes (k+1)}  \right]
\end{equation}
and the tree diagram is illustrated in Fig. \ref{fig:Kconn_P}.

 \begin{figure}[h]
   \centering
   \psset{unit=0.8}
   \rput[l](-3,-5.3){$\displaystyle P_T(-\Kconn^i \Kconn^{k+1-i}) ~ = -\frac{B_i B_{k+1-i}}{(k+2) i! (k+1-i)!} ~\times$}
   \hspace{4.5cm}
   \psscalebox{0.7}{
   \pstree{\Tf}{
           \pstree{\Tf}{
             \psset{linestyle=dashed}
             \pstree{\Tf[linestyle=solid]}{  
               \psset{linestyle=solid}                        
                \pstree{\Tf}{                 
                   \pstree{\Tf}{
                      \psset{linestyle=dashed}
                      \pstree{\Tf[linestyle=solid]}{
                        \psset{linestyle=solid}
                        \Tc{5pt}
                        \Tf*
                      }
                      \Tn
                   }
                   \Tf*
                  }
                \Tf*
             }
             \Tn 
           }  
           \Tf*
   }
   \psbrace[ref=lC,braceWidth=0.01, nodesepA=0.2](2,-7.5)(2,0){$k+1-i$ squared leaves}
   \psbrace[ref=lC,braceWidth=0.01, nodesepA=0.2](2,-15)(2,-7.5){$i$ squared leaves}   
}
    \caption{The tree diagram for $\Kconn^i \Kconn^{k+1-i}$}
    \label{fig:Kconn_P}
\end{figure}

We are left with the cases of $\Kconn^1 \Kconn^k$ and $\Kconn^k \Kconn^1$. Special care needs to be taken of since $\Kconn^1$ is a $\dbar$-derivation. By Eq. \eqref{eq:Kconn1}, we can write
\begin{equation}\label{eq:Kconn1k}
 \begin{split}
    \Kconn^1 \Kconn^k + \Kconn^k \Kconn^1 
    & = [\dbar, \Kconn^k] + \widetilde{R}^\bot_2 \circ \Kconn^k + \Rbar^\bot_2 \circ \Kconn^k + \Kconn^k \circ \Rbar^\bot_2 \\
    & = \widetilde{R}^\bot_2 \circ \Kconn^k + \Rbar^\bot_2 \circ \Kconn^k + \Kconn^k \circ \Rbar^\bot_2
 \end{split}
\end{equation}
as an equation in $\A^{0,k+1}_X(S^2 N^\vee \otimes \wedge^{k} N^\vee \otimes N)$ mod $\dbar$-exact term. The last equality is because $\Kconn^k=\Alt[(R^\bot_2)^{\otimes k}]$ is $\dbar$-closed. We now apply $-P_T$ to Eq. \eqref{eq:Kconn1k}. The computation of $P_T(-\widetilde{R}^\bot_2 \circ \Kconn^k - \Rbar^\bot_2 \circ \Kconn^k)$ is essentially the same as what we have done in Fig. \ref{fig:Kconn_sqr} and Fig. \ref{fig:Kconn_sqrD}, except for that we add branches to the lowest level of a $\Kconn^k$-tree. Note that we can also generate trees by adding $\Rbar^\bot_2$ to other squared leaves of higher levels, but those trees will be eliminated after we apply $P_T$. Hence we get
  \[ P_T(-\widetilde{R}^\bot_2 \circ \Kconn^k - \Rbar^\bot_2 \circ \Kconn^k) 
      = \frac{B_k}{(k+2)k!} \Alt \left[ (R^\bot_2)^{\otimes (k+1)}  \right] . \]
On the other hand, $P_T(-\Kconn^k \circ \Rbar^\bot_2)$ can be computed using exactly the same trick as used in Fig. \ref{fig:Kconn_comp} and Fig. \ref{fig:Kconn_P}, and we get
  \[  P_T(-\Kconn^k \circ \Rbar^\bot_2) = - \frac{B_k}{(k+2)k!} \Alt \left[ (R^\bot_2)^{\otimes (k+1)} \right]. \]
We see that the coefficients cancel out and thus
 \[  P_T(-\Kconn^1 \Kconn^k - \Kconn^k \Kconn^1) = 0 \in \A^{0,k+1}_X(S^2 N^\vee \otimes \wedge^{k} N^\vee \otimes N) \] 
up to $\dbar$-closed term.

In summary, we now obtain
\begin{equation}
  P_T \left(  - \sum_{i=1}^{k} \Kconn^i \Kconn^{k+1-i}  \right) = A_{k+1} \cdot \Alt \left[ (R^\bot_2)^{\otimes (k+1)} \right]
\end{equation}
where 
\begin{equation}\label{eq:A_recursion}
  A_{k+1} = - \frac{1}{k+2} \sum_{i=2}^{k-1} \frac{B_i B_{k+1-i}}{ i! (k+1-i)!}.
\end{equation}
Finally, to show $A_{k+1} = B_{k+1}/{(k+1)!}$, we first notice that by letting $k=2$ in Eq. \eqref{eq:A_recursion} we get $A_3=0=B_3 / 3!$ and hence for any odd value of $k+1$, $A_{k+1} =0 = B_{k+1}/(k+1)!$ since $B_n = 0$ for $n$ odd except for $n=1$.  For even $k+1$, we have the well-known recursive relation of Bernoulli numbers recorded in Lemma \ref{lemma:Bernoulli} below (see, e.g., \cite{Zeta}).

\end{proof}

\begin{lemma}\label{lemma:Bernoulli}
The Bernoulli numbers satisfy the recursive relation (for $n \geq 2$)
\begin{equation}
   \sum_{i=1}^{n-1} {2n \choose 2i} B_{2i} B_{2n-2i} = - (2n+1) B_{2n}.
\end{equation}
\end{lemma}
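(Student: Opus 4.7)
The lemma is a classical recurrence for the Bernoulli numbers, and the cleanest proof proceeds via generating functions. I would start from the definition
\[
 \frac{x}{e^x - 1} = \sum_{n \geq 0} \frac{B_n}{n!} x^n,
\]
and use that the only nonzero odd Bernoulli number is $B_1 = -1/2$. Setting
\[
 F(x) := \frac{x}{e^x - 1} + \frac{x}{2} = \sum_{n \geq 0} \frac{B_{2n}}{(2n)!}\, x^{2n},
\]
I obtain an even function whose Taylor coefficients package exactly the quantities appearing in the identity.

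The next step is to derive a Riccati-type ODE satisfied by $F$. Recognizing $F(x) = (x/2)\coth(x/2)$ and using $(\coth u)' = 1 - \coth^2 u$ together with $\coth^2 u - 1 = \operatorname{csch}^2 u$, a direct computation yields
\[
 x F'(x) = F(x) - F(x)^2 + \frac{x^2}{4}.
\]
This is the crucial algebraic identity that converts a statement about the infinite sequence $\{B_{2n}\}$ into a single closed relation on the generating function.

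Finally, I would extract coefficients of $x^{2n}$ on both sides. The left-hand side contributes $2n\, B_{2n}/(2n)!$. The right-hand side contributes
\[
 \frac{B_{2n}}{(2n)!} - \sum_{i=0}^{n} \frac{B_{2i}\, B_{2n-2i}}{(2i)!(2n-2i)!},
\]
with the $x^2/4$ correction affecting only the $n=1$ coefficient, hence invisible for $n \geq 2$. Multiplying through by $(2n)!$, separating the boundary terms $i=0$ and $i=n$ (each equal to $B_{2n}$, since $B_0 = 1$), and rearranging gives
\[
 \sum_{i=1}^{n-1} \binom{2n}{2i} B_{2i}\, B_{2n-2i} = -(2n+1) B_{2n},
\]
which is the asserted recurrence. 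There is no real obstacle here beyond carefully deriving the ODE and tracking the boundary contributions in the Cauchy product; the whole argument is a short calculation once $F(x)$ is identified with $(x/2)\coth(x/2)$.
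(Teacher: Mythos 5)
The paper itself does not prove Lemma \ref{lemma:Bernoulli}: it simply records the identity as ``well-known'' and cites \cite{Zeta}, so there is no in-text argument to compare against. Your generating-function proof is correct and self-contained. The identification
\[
  F(x) = \frac{x}{e^x-1} + \frac{x}{2} = \frac{x}{2}\coth\!\left(\frac{x}{2}\right) = \sum_{n\geq 0}\frac{B_{2n}}{(2n)!}x^{2n}
\]
is right, the ODE $xF'(x) = F(x) - F(x)^2 + \tfrac{x^2}{4}$ follows from $(\coth u)' = 1 - \coth^2 u$ with $u = x/2$, and for $n\geq 2$ the $x^2/4$ term is invisible, so extracting the $x^{2n}$ coefficient gives
\[
  \frac{2n\,B_{2n}}{(2n)!} = \frac{B_{2n}}{(2n)!} - \sum_{i=0}^{n}\frac{B_{2i}B_{2n-2i}}{(2i)!(2n-2i)!}.
\]
Multiplying by $(2n)!$ and peeling off the boundary terms $i=0$ and $i=n$, each contributing $B_{2n}$, yields exactly $\sum_{i=1}^{n-1}\binom{2n}{2i}B_{2i}B_{2n-2i} = -(2n+1)B_{2n}$. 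Your approach supplies an argument where the paper only supplies a citation, and it is the standard one; an equally common alternative is to square the series for $x\cot x$ and use $x\cot x = 1 - 2\sum_{k\geq 1}\zeta(2k)x^{2k}/\pi^{2k}$, which is probably the route the cited reference takes, but the Riccati-ODE derivation you give is cleaner for this particular identity.
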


\section{Generalized Todd class}\label{subsec:Todd}
We have two differentials, $\widetilde{\Kconn}= \Kconn^0 + \dbar$ and $\Kconn$ on $\Ktot$. We set 
  \[ t= \Kconn - \widetilde{\Kconn} = (\Kconn^1-\dbar) + \sum_{k \geq 2} \Kconn^k. \]
By Theorem \ref{thm:Kconn}, when the class $\gamma_\sigma = 0$, up to first order we have
\begin{equation}\label{eq:t} 
  t = \frac{1}{2} R^\bot_2 +  \sum_{k \geq 2} \frac{B_k}{k!} \Alt \left[ (R^\bot_2)^{\otimes k} \right]  =   \sum_{k \geq 1} (-1)^k \frac{B_k}{k!} \Alt \left[ (R^\bot_2)^{\otimes k} \right] \in \bigoplus_{j \geq 1} \A^{0, j}_X(S^1 N^\vee \otimes \wedge^j N^\vee \otimes N),
\end{equation}  
mod $\dbar$-exact terms, which acts an $\AOD_X(\Shat(N^\vee))$-linear derivation on $\Ktot$ of degree $1$ (note that $B_1=1/2$ and $B_{2n+1}=0$ for $n \geq 1$). $\widetilde{\Kconn}$ and $\Kconn$ induce two differentials $\tilde{d}_{\Hom}$ and $d_{\Hom}$ respectively. Denote their difference by
 \[ T = d_{\Hom} - \tilde{d}_{\Hom} = [t, \dash].  \]
We now apply the Homological Perturbation Theory in \S \ref{subsec:perturb} to $(\End^\bullet_{\A_N}(\Ktot), \tilde{d}_{\Hom})$ and its two sets of contractions $(\tilde{\pi}_T, \tilde{i}_T, \tilde{P}_T)$ and $(\tilde{\pi}_{GV}, \tilde{i}_{GV}, \tilde{P}_{GV})$ to $(\AOD_X(\wedge^\bullet N), \dbar)$ with the perturbation $T$ of $d_{\Hom}$ to obtain the the endomorphism complex $(\End^\bullet_{\A_\Yhat}(K^\bullet_{Tot}), d_{\Hom})$ of the cohesive module $(K^\bullet, \Kconn)$ over $\A^\bullet(\Yhat)$. Since the underlying graded module is unchanged, we will still denote it by $\End^\bullet=\End^\bullet_{\A}(K^\bullet_{Tot})$ and only write the differential differently. We denote the perturbed contractions determined by \eqref{eq:perturb_contraction} as $(\pi_T, i_T, P_T)$ and $(\pi_{GV}, i_{GV}, P_{GV})$ respectively and denote the corresponding perturbed differentials on $\AOD_X(\wedge^\bullet N)$ by $\Nconn_T$ and $\Nconn_{GV}$. We have
\begin{equation}\label{eq:perturb_T}
  \pi_T = \tilde{\pi}_T (1 - Z_T \tilde{P}_T), \quad i_{T} = (1 - \tilde{P}_T Z_T) \tilde{i}_T, \quad \tilde{P}_{T} = \tilde{P}_T - \tilde{P}_T Z_T \tilde{P}_T, \quad \Nconn_T = \dbar + \tilde{\pi}_T Z_T \tilde{i}_T,
\end{equation}
where
\begin{equation}\label{eq:Z_T}
  Z_T = \sum_{k \geq 0} (-1)^k T  (\tilde{P}_T T)^k = \sum_{k \geq 0} (-1)^k (T \tilde{P}_T)^k T.
\end{equation}
Similarly,
\begin{equation}\label{eq:perturb_GV}
\begin{gathered}
  \pi_{GV} = \tilde{\pi}_{GV} (1 - Z_{GV} \tilde{P}_{GV}), \quad i_{GV} = (1 - \tilde{P}_{GV} Z_{GV}) \tilde{i}_{GV}, \quad P_{GV} = \tilde{P}_{GV} - \tilde{P}_{GV} Z_{GV} \tilde{P}_{GV}, \\ \Nconn_{GV} = \dbar + \tilde{\pi}_{GV} Z_{GV} \tilde{i}_{GV},
\end{gathered}
\end{equation}
where
\begin{equation}\label{eq:Z_GV}
  Z_{GV} = \sum_{k \geq 0} (-1)^k T  (\tilde{P}_{GV} T)^k = \sum_{k \geq 0} (-1)^k (T \tilde{P}_{GV})^k T.
\end{equation}

All maps defined above are $\AOD(X)$-linear. Note that it is no longer true that $i_T = i_{GV}$. We get two cohesive modules $(\A^{0,0}_X(\wedge^\bullet N), \Nconn_T)$ and $(\A^{0,0}_X(\wedge^\bullet N), \Nconn_{GV})$ over $(\AOD(X),\dbar)$, whose total complexes are quasi-isomorphic to $(i^*i_*\Osheaf_X)^\vee$ and $i^!i_* \Osheaf_X$ respectively in $D^b_{coh}(X)$. The quasi-isomorphism 
  \[  \pi_T: (\End^\bullet_{\A}(\Ktot), d_{\Hom}) \simeq (\AOD_X(\wedge^\bullet N), \Nconn_T) \]
realizes the adjunction $\hat{i}^* \dashv \hat{i}_*$,
\begin{equation}
  \RHom^\bullet_{D^b(\Yhat)} (\hat{i}_* \Osheaf_X, \hat{i}_* \Osheaf_X) \simeq \RHom^\bullet_{D^b(X)} (\hat{i}^*\hat{i}_*\Osheaf_X, \Osheaf_X).
\end{equation}
The quasi-isomorphism
  \[  \pi_{GV}: (\End^\bullet_{\A}(\Ktot), d_{\Hom}) \simeq (\AOD_X(\wedge^\bullet N), \Nconn_{GV}) \]
realizes the Grothendieck-Verdier duality for $\hat{i}: X \hookrightarrow \Yhat$,
\begin{equation}
  \RHom^\bullet_{D^b(\Yhat)} (\hat{i}_* \Osheaf_X, \hat{i}_* \Osheaf_X) \simeq \RHom^\bullet_{D^b(X)} (\Osheaf_X, \hat{i}^! \hat{i}_* \Osheaf_X).
\end{equation}

\begin{lemma}
  \begin{enumerate}
    \item
      $\pi_T = \tilde{\pi}_T$, $\pi_{GV} = \tilde{\pi}_{GV}$.
    \item 
      $\Nconn_T = \Nconn_{GV} = \dbar$.
  \end{enumerate}
\end{lemma}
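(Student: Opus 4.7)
Both assertions follow from a single identity, namely that $\tilde{\pi}_T \circ T = 0$ as an operator on $\End^\bullet_{\A^\bullet(\Yhat)}(\Ktot)$ (and analogously $\tilde{\pi}_{GV} \circ T = 0$). Granted these identities, the expansions $Z_T = \sum_{k\ge 0}(-1)^k T(\tilde{P}_T T)^k$ and $Z_{GV} = \sum_{k\ge 0}(-1)^k T(\tilde{P}_{GV} T)^k$ vanish termwise upon precomposition with $\tilde{\pi}_T$ respectively $\tilde{\pi}_{GV}$, so the perturbation formulas \eqref{eq:perturb_T} and \eqref{eq:perturb_GV} immediately yield $\pi_T = \tilde{\pi}_T$, $\pi_{GV} = \tilde{\pi}_{GV}$, and $\Nconn_T = \Nconn_{GV} = \dbar$.

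The core structural observation I would establish is that the perturbation $t = \Kconn - \widetilde{\Kconn}$ strictly raises the $\Shat(N^\vee)$-filtration on $K^\bullet$. Indeed, on the $\Shat$-generator $N^\vee \subset K^0$ one has $t = \Dnormal - \dbar = \sum_{k\ge 2}\widetilde{R}^\bot_k + \sum_{k\ge 1}\widetilde{B}_k\circ\nabla^\bot$ by \eqref{eq:Dnormal}, each summand landing in $\Shat^{\ge 2}(N^\vee)$. On the $\wedge$-generator $K^{-1} = \A^{0,0}_X(\wedge^1 N^\vee)$, the difference $(\Kconn^1 - \dbar)|_{K^{-1}}$ lies in $\A^{0,1}_X(\Shat^{\ge 1}(N^\vee)\otimes\wedge^1 N^\vee)$ by \eqref{eq:Kconn1}. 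Moreover, since $\Dnormal$ has only $\A^{0,1}$-valued components, $\Kconn^k|_{K^0} = 0$ for all $k \ge 2$, so each such $\Kconn^k$ is $\Shat$-linear as a derivation of $K^\bullet$; Theorem~\ref{thm:Kconn} then places its value on $K^{-1}$ in $\A^{0,k}_X(\Shat^{\ge 1}(N^\vee)\otimes\wedge^k N^\vee)$, since $\Alt[(R^\bot_2)^{\otimes k}]$ sits in the $S^1$-slot of $S^{\le 1}N^\vee\otimes\wedge^k N^\vee$ (the $\End(N^\vee)$-factor contributes the single $N^\vee$-symmetric index). Propagating these base-case statements by the Leibniz rule for the derivation $t$ yields the structural claim on all of $K^\bullet$.

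To derive $\tilde{\pi}_T \circ T = 0$, write $T(M) = tM - (-1)^{|M|}Mt$. Lemma~\ref{lemma:Hom_contraction} describes $\tilde{\pi}_T$ as extracting the $\Shat^0(N^\vee)\otimes\wedge^0 N^\vee$-component of $M(1\otimes w)$ inside the codomain $K^0$. Then $(tM)(1\otimes w) = t(M(1\otimes w))$ has image in $\Shat^{\ge 1}(N^\vee)\otimes K^\bullet$ by the structural claim, killing the $\Shat^0$-projection; similarly $(Mt)(1\otimes w) = M(t(1\otimes w))$, where one uses that $M$ is $\A^\bullet(\Yhat)$-linear, hence $\Shat(N^\vee)$-linear, to pull the $\Shat^{\ge 1}$-prefactor of $t(1\otimes w)$ through $M$ and land again in $\Shat^{\ge 1}(N^\vee)\otimes K^\bullet$. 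The argument for $\tilde{\pi}_{GV}$ is identical, since its definition also relies on extracting a constant $\Shat$-component (now in the $\Hom(K^{-d}, K^{\bullet-d})$-column).

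The main delicate point of this plan is verifying the structural claim rigorously in the presence of the $\dbar$-exact ambiguity in Theorem~\ref{thm:Kconn}; one must check inductively via the recursion $\Kconn^{k+1} = P_T(-\sum_i\Kconn^i\Kconn^{k+1-i})$ that the property $\Kconn^k|_{K^{-1}} \in \A^{0,k}_X(\Shat^{\ge 1}(N^\vee)\otimes\wedge^k N^\vee)$ is actually preserved (not merely up to $\dbar$-exact terms). This amounts to tracing $\Shat$-degrees through the recursion, using that $P_T$ lowers the $\Shat$-degree by exactly one, that $\Kconn^1 = \dbar + (\Kconn^1 - \dbar)$ splits into a $\Shat$-preserving piece and a $\Shat^{\ge 1}$-raising piece, and that compositions of the $\Shat$-linear pieces $\Kconn^{\ge 2}$ automatically land in $\Shat^{\ge 2}$.
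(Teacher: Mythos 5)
Your proof is correct and identical in strategy to the paper's one-sentence argument, which simply asserts $\tilde{\pi}_T T = \tilde{\pi}_{GV} T = 0$ (hence $\tilde{\pi}_T Z_T = \tilde{\pi}_{GV} Z_{GV} = 0$) on the grounds that $\tilde{\pi}_T$ and $\tilde{\pi}_{GV}$ kill the $S^{\ge 1}N^\vee$-parts while $T=[t,-]$ lands there. Your elaboration of the filtration claim (that $t$ strictly raises the $\Shat(N^\vee)$-degree) is precisely what the paper leaves implicit, and the delicate point you flag does resolve favorably: the $S^1$-component of each $\Kconn^k$ ($k\ge 2$) is $\dbar$-closed whether or not one adds a $\dbar$-exact piece, so $[\dbar,\Kconn^k]$ contributes no $S^1$-term and applying $P_T$ in the recursion can never produce an $S^0$-term.
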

\begin{proof}
  We only need to observe that $\pi_T T = \pi_{GV} T = 0$ and hence $\pi_T Z_T = \pi_{GV} Z_{GV} = 0$, since $\pi_T$ and $\pi_{GV}$ map any $S^{\geq 1}N^\vee$ part to zero.
\end{proof}

We want to study the $\AOD(X)$-linear endomorphism $q_\sigma =\pi_T \circ i_{GV}=\tilde{\pi}_T \circ i_{GV}$ of $(\AOD_X(\wedge^\bullet N), \dbar)$. By Eq. \eqref{eq:perturb_T} -- \eqref{eq:Z_GV}, $q_\sigma$ only depends on the parts of $t$ of order less or equal than one. Since $q_\sigma$ is $\AOD(X)$-linear, we only need to specify its value on $\A^{0,0}_X (\wedge^\bullet N)$.

We set
  \begin{equation}
    \Todd = \det \left( \frac{R^\bot_2}{1 - \exp(-R^\bot_2)} \right) = \det \left(\sum_{n=0}^\infty (-1)^n \frac{B_n}{n!} \Alt \left[ (R^{\bot}_2 )^{\otimes n} \right] \right), 
  \end{equation}
which is regarded as a cohomology class in $\bigoplus_{j \geq 0} H^j (X, \wedge^j N^\vee)$. By $\det$ we mean taking determinant of the $\End(N^\vee)$-component. Since $\det( - ) = \exp( \tr \ln( - ))$, we can rewrite 
  \begin{equation}
    \Todd = \exp \left( \tr \ln \left( \frac{R^\bot_2}{1 - \exp(-R^\bot_2)} \right) \right),
  \end{equation}
and by the power series expansion
  \begin{equation}
    \ln \left( \frac{x}{1- e^{-x}} \right) = - \sum_{n=1}^\infty  \frac{B_n}{n! n} x^n,
  \end{equation}
we have
  \begin{equation}
    \Todd = \exp \left( \sum_{n=1}^\infty \frac{1}{n} \rho_n  \right),
  \end{equation}
where 
   \begin{equation}
     \rho_n =  - \frac{B_n}{n!} \tr \left( \Alt \left[ (R^{\bot}_2 )^{\otimes n} \right] \right) \in \A^{0,n}_X(\wedge^n N^\vee).
   \end{equation}

\begin{theorem}\label{thm:main}
  For any closed embedding $X \hookrightarrow Y$ with a chosen splitting $\sigma$ of the first order formal neighborhood such that $\gamma_{\sigma}=0$, the restriction of $q_\sigma$ on $H^\bullet_{\dbar}(X, \wedge^d N) \subset H^\bullet_{\dbar}(X, \wedge^\bullet N)$ is given by contraction with the generalized Todd class $\Todd$.
\end{theorem}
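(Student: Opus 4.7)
The strategy is to unravel the definition $q_\sigma = \tilde{\pi}_T \circ i_{GV}$ via the Homological Perturbation Theory and match the result with the series expansion of $\Todd$. Starting from Eq.~\eqref{eq:perturb_GV}, write
\[
  q_\sigma(\omega) \;=\; \tilde{\pi}_T\bigl(1 - \tilde{P}_{GV}\,Z_{GV}\bigr)\tilde{i}_{GV}(\omega)
           \;=\; \sum_{m\ge 0}(-1)^m\,\tilde{\pi}_T\,(\tilde{P}_{GV}\,T)^m\,\tilde{i}_{GV}(\omega),
\]
for $\omega \in \wedge^d N$. The first reduction is the one already flagged right before the theorem: only the part of $t$ of order $\le 1$ in $\ShatV$ can contribute, because $\tilde{\pi}_T$ sees only the constant term in $\ShatV$, while each occurrence of $\tilde{P}_{GV}$ drops the $\ShatV$-degree by exactly $1$ (cf.~\eqref{eq:P_Kd} and \eqref{eq:P_GV}) and each $T=[t,-]$ raises it by the $\ShatV$-degree of $t$. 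Consequently we may replace $t$ by
\[
   t_0 \;=\; \sum_{k\ge 1}(-1)^k\,\frac{B_k}{k!}\,\Alt\!\bigl[(R^\bot_2)^{\otimes k}\bigr] \quad\text{mod $\dbar$-exact and $S^{\ge 2}N^\vee$ terms,}
\]
which by Theorem~\ref{thm:Kconn} captures all the relevant information in $t$.

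The next step is to identify $\tilde{i}_{GV}(\omega)$ with $\iota_\omega = i_H(\omega)\in \Hom(K^{-d},K^0)$ and then compute the iterated composition $(\tilde{P}_{GV}T)^m\,\iota_\omega$ graphically, exactly as in the trees of Figs.~\ref{fig:Kconn_sqr}--\ref{fig:Kconn_P}. Each application of $T$ glues a copy of $t_0$ (hence a collection of $R^\bot_2$-trees) onto one of the squared leaves of the current tree; each subsequent application of $\tilde{P}_{GV}$ then contracts one $\ShatV$-factor against one $\wedge^1 N^\vee$-factor sitting inside the tree, with a scalar coefficient $1/(\ell+d-k)$ read off from \eqref{eq:P_Kd}. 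Because $\tilde{\pi}_T$ only keeps the rightmost $\ShatV$-constant piece and because $P_{GV}^2=0$ and $\tilde{\pi}_T\,\tilde{P}_{GV}=0$, only trees whose free $N$-outputs have been fully contracted against free $N^\vee$-legs survive, and these correspond precisely to the closed ``cyclic'' patterns needed to produce a trace.

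The third step is the combinatorial identification. Each surviving tree of total $R^\bot_2$-weight $n$ represents a scalar multiple of $\Alt\!\bigl[(R^\bot_2)^{\otimes n}\bigr]$ in $\A^{0,n}_X(\wedge^n N^\vee \otimes \End N^\vee)$ with its $\End N^\vee$ part cyclically closed, i.e.\ a multiple of $\tr\,\Alt\!\bigl[(R^\bot_2)^{\otimes n}\bigr]$; contracting the remaining $\wedge^n N^\vee$-leg against $\omega$ yields $\rho_n\contract\omega$ up to a rational factor. The coefficients produced by the perturbation expansion $\sum_m (-1)^m (\tilde{P}_{GV}T)^m$ must be shown to sum, cycle by cycle, to the coefficient $1/n$ appearing in
\[
   \ln\!\left(\frac{x}{1-e^{-x}}\right)\;=\;-\sum_{n\ge 1}\frac{B_n}{n!\,n}\,x^n,
\]
while the sum over ``disjoint unions of cycles'' (i.e.\ compositions of several independent $\tilde{P}_{GV}T$-blocks applied to the fixed $\iota_\omega$) is the exponential of the single-cycle sum. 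The outcome is
\[
   q_\sigma(\omega) \;=\; \exp\!\Bigl(\sum_{n\ge 1}\tfrac{1}{n}\rho_n\Bigr)\contract\omega
           \;=\;\exp\!\Bigl(\tr\ln\!\bigl(R^\bot_2/(1-e^{-R^\bot_2})\bigr)\Bigr)\contract\omega
           \;=\;\Todd\contract\omega,
\]
which is the asserted identity in $\bigoplus_j H^j(X,\wedge^{d-j}N)$.

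\textbf{Anticipated main obstacle.} The delicate point is not the individual tree calculation, which proceeds in the same style as in Section~\ref{sec:Koszul}, but the bookkeeping that turns the iterated perturbation series into the product of \emph{exponential} and \emph{logarithm} series. Concretely, one must prove that the rational numbers assembled from (i) the coefficients $(-1)^k B_k/k!$ inside each copy of $t_0$, (ii) the $1/(\ell+d-k)$ factors from repeated $\tilde{P}_{GV}$'s, and (iii) the sign $(-1)^m$ from the geometric series, collapse exactly to $\frac{1}{n}\cdot(-B_n/n!)$ per cyclic block and then exponentiate when several blocks are composed disjointly. This combinatorial identity, which is the ``trace--log--exp'' mechanism responsible for the Todd class, is the heart of the proof, and is precisely where the recursion for the Bernoulli numbers recorded in Lemma~\ref{lemma:Bernoulli} enters a second time.
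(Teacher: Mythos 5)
Your overall strategy is the same as the paper's: start from the perturbation expansion of $i_{GV}$, reduce to the $S^{\le 1}$-part of $t$, and match the resulting series against the expansion of $\Todd$. But your description of the final combinatorial step is wrong, and you have also skipped the structural trick that makes the computation tractable.

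The trick you omit is the insertion of $\tilde{i}_T\,\tilde{\pi}_T = \Id$ on $M = \AOD_X(\wedge^\bullet N)$ between consecutive factors of $\tilde{P}_{GV}\,T$. This converts
\[
  q_\sigma \;=\; \tilde{\pi}_T\sum_{m\ge 0}(-\tilde{P}_{GV}T)^m\,\tilde{i}_{GV}
  \quad\text{into}\quad
  q_\sigma \;=\; \sum_{k\ge 0}\bigl(-\tilde{\pi}_T\tilde{P}_{GV}T\,\tilde{i}_T\bigr)^k ,
\]
so that after each step one is back in $M$ with all $\ShatV$-factors eliminated and \emph{already} a single trace $\rho_j$ produced. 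There is then no ``cycle structure'' left over to analyze: each application of the one-step operator contributes one factor $\rho_{j}$ and a scalar $1/(d-l+j)$, and the iterated composition on $\eta\in\AOD_X(\wedge^d N)$ yields
\[
  q_\sigma(\eta)\;=\;\sum_{k\ge 0}\sum_{j_1,\dots,j_k\ge 1}\;\prod_{p=1}^{k}\frac{1}{j_1+\cdots+j_p}\;
  (\rho_{j_k}\wedge\cdots\wedge\rho_{j_1})\lrcorner\eta ,
\]
precisely because $\eta$ lies in the top exterior power so that at stage $p$ one has $d-l = j_1+\cdots+j_{p-1}$. Your tree-by-tree ``cyclic contraction'' reasoning attempts to reconstruct this behaviour from scratch, which is both harder and unnecessary.

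The more serious gap is the last step. You assert that the collapse of the perturbation coefficients to the exponential-of-log form is ``precisely where the recursion for the Bernoulli numbers recorded in Lemma~\ref{lemma:Bernoulli} enters a second time.'' This is not so. The Bernoulli recursion is used only in the proof of Theorem~\ref{thm:Kconn} (to identify $\Kconn^k$); it plays no role in the proof of Theorem~\ref{thm:main}. By the time one reaches the iteration above, the Bernoulli coefficients are already locked inside the $\rho_j$'s via Eq.~\eqref{eq:t}. What remains is a purely combinatorial identity independent of the $B_n$'s: one must show that
\[
  \sum_{\mathbf{c}\in C(l,k;n_1,\dots,n_k)}\ \prod_{p=1}^{k}\frac{1}{\sum_{q\le p}c_q}
  \;=\;\frac{1}{k!}\sum_{\mathbf{c}\in C(l,k;n_1,\dots,n_k)}\ \prod_{p=1}^{k}\frac{1}{c_p},
\]
which is exactly Lemma~\ref{lemma:frac}. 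The right-hand side is what appears when one expands $\exp\bigl(\sum_n\tfrac{1}{n}\rho_n\bigr)$ and collects the degree-$k$ terms; the left-hand side is the partial-sum coefficient from the iteration. So the ``exp--log'' mechanism you correctly anticipate is implemented by a composition-versus-partition identity (proved by a telescoping induction), not by the Bernoulli recursion. As written, your proposal leaves this identity unproved and points to the wrong lemma for it, so the argument does not close.
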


In the case of the diagonal embedding $X \hookrightarrow X \times X$, Theorem \ref{thm:main} specializes to the following corollary by \eqref{eq:Dolbeaut_isom_diag} and \eqref{eq:Dnormal_diag}.

\begin{corollary}
  For the diagonal embedding $\Delta: X \hookrightarrow X \times X$, we choose the splitting $\sigma$ to be the one induced by identifying the normal bundle with $\pr^*_2 TX$, where $\pr_2$ is the projection of $X \times X$ to its second component. Then the restriction of $q_\sigma$ on $H^\bullet_{\dbar}(X, \wedge^d TX) \subset H^\bullet_{\dbar}(X, \wedge^\bullet TX)$ is given by contraction with the usual Todd class of $TX$.
\end{corollary}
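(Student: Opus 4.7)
The strategy is to unravel the perturbation formulas \eqref{eq:perturb_T}--\eqref{eq:Z_GV} to obtain an explicit chain-level formula for $q_\sigma(\omega)$ when $\omega \in \A^{0,\bullet}_X(\wedge^d N)$ is a $\dbar$-closed representative of a class in $H^\bullet_{\dbar}(X, \wedge^d N)$. Since $\tilde{i}_T = \tilde{i}_{GV} = i_H$ and $\pi_T = \tilde{\pi}_T$, an easy manipulation of the geometric series in $\tilde{P}_{GV}Z_{GV}$ gives
\begin{equation*}
q_\sigma(\omega) \;=\; \tilde{\pi}_T \sum_{j \geq 0}(-\tilde{P}_{GV} T)^j\, i_H(\omega).
\end{equation*}
Only the part of $t$ modulo symmetric degree $\geq 2$ can contribute, because $\tilde{\pi}_T$ retains only the $S^0 N^\vee$-part of the $\Shat(N^\vee)$-factor; hence, for this computation, it suffices to work with the explicit formula \eqref{eq:t} for $t$.

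The core step is to understand how $T = [t,-]$ and $\tilde{P}_{GV}$ act on $i_H(\wedge^d N)\subset \End^\bullet$. Viewing $t$ via \eqref{eq:t} as an element of $\A^{0,\bullet}_X(N^\vee \otimes \wedge^\bullet N^\vee \otimes N)$ acting by derivation on $\Ktot$, a single application of $T$ to $i_H(\omega)$ contracts the $N$-factor of $t$ against one of the factors of $\omega\in \wedge^d N$, thereby raising the $\wedge^\bullet N^\vee$-rank, lowering the $\wedge^\bullet N$-rank by one, and raising the $S^\bullet N^\vee$-degree by one. The dual Koszul homotopy $\tilde{P}_{GV}$ (built from $P_{\Kd}$ of \eqref{eq:P_Kd}) then lowers the symmetric degree back, contributing a rational coefficient coming from the denominator $1/(l+d-k)$. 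Iterating, the $j$-th term assembles $j$ successive copies of components of $t$ into a cyclic trace in $\End(N^\vee)$, summed over insertion data; this is efficiently organized in the tree-diagrammatic language of the proof of Theorem \ref{thm:Kconn}. The projection $\tilde{\pi}_T$ then extracts the piece of $\wedge^\bullet N^\vee$-degree matching the $\A^{0,\bullet}_X$-degree, naturally interpreted as the contraction of $\omega$ with a universal endomorphism-valued form depending only on $t$.

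Substituting the expansion $t = \sum_{k\geq 1}(-1)^k(B_k/k!)\Alt[(R_2^\bot)^{\otimes k}]$, the $j$-fold contributions organize into a sum of traces over $\End(N^\vee)$ of products of $R_2^\bot$, with weights dictated by the $P_{\Kd}$-denominators and the Bernoulli coefficients. Cyclic invariance of the trace together with the recursive Bernoulli identity of Lemma \ref{lemma:Bernoulli} -- equivalently, the power-series expansion $\log(x/(1-e^{-x})) = -\sum_{n \geq 1}B_n x^n/(n\cdot n!)$ -- then assembles the full sum into
\begin{equation*}
q_\sigma(\omega) \;=\; \omega \contract \exp\!\left(\sum_{n\geq 1}\tfrac{1}{n}\rho_n\right) \;=\; \omega \contract \Todd,
\end{equation*}
at the chain level, descending to cohomology as stated. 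The main obstacle is the bookkeeping in the second paragraph: one must verify that the rational coefficients produced by iterated applications of $\tilde{P}_{GV}$ combine with the Bernoulli weights inside $t$ to yield precisely the coefficient $1/n$ in front of $\rho_n$. This is expected to require an induction on $j$ together with a cyclic-symmetry argument, parallel in spirit to the induction for Theorem \ref{thm:Kconn}. The corollary for the diagonal embedding then follows from \eqref{eq:Dolbeaut_isom_diag}--\eqref{eq:Dnormal_diag}.
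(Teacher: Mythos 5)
Your proposal is broadly sound but over-elaborate: the paper's own proof of this corollary is a one-line specialization of Theorem~\ref{thm:main} via~\eqref{eq:Dolbeaut_isom_diag}--\eqref{eq:Dnormal_diag}, whereas you essentially re-derive Theorem~\ref{thm:main} from scratch before specializing. Since Theorem~\ref{thm:main} is already proved, the efficient argument is simply: (i) note that for the diagonal with $\sigma$ induced by $\pr_2^*TX$, the hypothesis $\gamma_\sigma=0$ holds automatically (by Prop.~\ref{prop:gamma}, because $\pr_1$ retracts the \emph{full} formal neighborhood of the diagonal onto $X$, so $\sigma^*N^\vee$ extends to $\Xsecond$); (ii) apply Theorem~\ref{thm:main}; (iii) under~\eqref{eq:Dolbeaut_isom_diag}--\eqref{eq:Dnormal_diag} the class $R^\bot_2$ is identified with a representative of the Atiyah class $\alpha_{TX}$, so $\Todd$ becomes the usual Todd class of $TX$.

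Within your re-derivation there is also a genuine mis-attribution. You claim the iterated $(-\tilde{P}_{GV}T)$-sums assemble into $\exp\bigl(\sum_{n\ge 1}\rho_n/n\bigr)$ by ``cyclic invariance of the trace together with the recursive Bernoulli identity of Lemma~\ref{lemma:Bernoulli}.'' That is not what does the work. The Bernoulli coefficients $(-1)^kB_k/k!$ are already built into the first-order expansion of $t$ (Eq.~\eqref{eq:t}, coming from Theorem~\ref{thm:Kconn}), and cyclic invariance plays no role because the $\rho_n$ are already scalar-valued after taking $\tr$. What actually converts the nested partial-sum denominators $\prod_p\bigl(\sum_{q\le p}j_q\bigr)^{-1}$ arising from iterated applications of $P_{\Kd}$ into the $\frac{1}{k!}\prod 1/n_i$ structure of the exponential is precisely the composition/partition identity of Lemma~\ref{lemma:frac}. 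That lemma is the heart of the final step of the paper's proof of Theorem~\ref{thm:main}, and your sketch neither invokes it nor supplies a substitute; the ``bookkeeping'' gap you flag yourself is exactly this missing combinatorial identity.
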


\begin{proof}[Proof of Theorem \ref{thm:main}]
  By Eq. \eqref{eq:perturb_T} and \eqref{eq:Z_T},
    \begin{equation}
      i_{GV} = \sum_{k \geq 0} (- \tilde{P}_{GV} T)^k \tilde{i}_{GV},
    \end{equation}

  Since $\tilde{P}_T$ reduces the order of $\Shat$-part by one while $T$ has no constant term, and $\pi_T = \tilde{\pi}_T$ only depends on the $S^0$-terms, we only need the $S^{\leq 1}$-part of $T$ and hence of $t$ (Eq. \eqref{eq:t}). 
    
  
We observe that $\pi_T (- \tilde{P}_{GV} T)^k $ ($\forall ~ k \geq 0$) can only be nonzero on the subspace $M=\AOD_X(\wedge^\bullet N) \subset \End^\bullet_{\A}(\Ktot)$. In other words, we can write
  \begin{equation}\label{eq:q_new}
    q_\sigma = \pi_T i_{GV} =  \sum_{k \geq 0} (- \pi_T \tilde{P}_{GV} T  \tilde{i}_T)^k,
  \end{equation}
since $\tilde{i}_T \pi_{T}$ acts as identity on $M$. For any $\eta \in \A^{0,m}_X(\wedge^l N)$ of total degree $|\eta|=m-l$, we have 
  \begin{equation}\label{eq:pigti}
    \begin{split}
    - \pi_T \tilde{P}_{GV} T  \tilde{i}_T(\eta) 
    &= -\pi_T \tilde{P}_{GV} [t, \tilde{i}_T(\eta)] \\
    &= - \pi_T \tilde{P}_{GV} \left[ t \circ (\tilde{i}_T(\eta)) - (-1)^{|\eta|} \tilde{i}_T(\eta) \circ t \right] \\
    &= (-1)^{|\eta|} \pi_T \tilde{P}_{GV} [\tilde{i}_T(\eta) \circ t] \\
    &= (-1)^{|\eta|}\pi_T \tilde{P}_{GV} (\tilde{i}_T(\eta) \circ t) \\
     &=(-1)^{|\eta|} \pi_T \tilde{P}_{GV} ( t \lrcorner \eta) \\
     & =  (-1)^{|\eta|} \sum_{j \geq 1}   (-1)^j \frac{B_j}{j!} \cdot (-1)^{|\eta|-j} P_{\Kd} \left( \Alt \left[ (R^\bot_2)^{\otimes k} \right] \lrcorner \eta \right) \\ 
     &= - \sum_{j \geq 1} \frac{B_j}{j! (d-l+j)}  \tr \left( \Alt \left[ (R^{\bot}_2 )^{\otimes j} \right] \right) \lrcorner \eta  \\
     &= \sum _{j \geq 1} \frac{1}{(d-l+j)}  \rho_j \lrcorner \eta,
   \end{split}
 \end{equation} 
where minus sign in the last line and the fraction $1/(d-l+j)$ are due to \eqref{eq:P_Kd}. The way the trace operator arises in the last second equality can be visualized as contracting the top root square with only circle leaf at bottom in the tree diagram as in Fig. \ref{fig:Kconn_P}.Plug Eq. \eqref{eq:pigti} into Eq. \eqref{eq:q_new}, we obtain, for any $\eta \in \AOD_X(\wedge^d N)$
  \begin{equation}
    \begin{split}
      q_{\sigma} (\eta) 
       &= \sum_{k \ge 0}\sum_{ j_1, \cdots, j_k \ge 1 }  \prod_{p=1}^{k} \frac{1}{ \sum_{q=1}^{p} j_q}  (\rho_{j_k} \wedge \cdots \wedge \rho_{j_1}) \lrcorner \eta.  \\
    \end{split}    
  \end{equation}
Then $q_{\sigma}(\eta) = \Todd \lrcorner \eta$ follows from Lemma \ref{lemma:frac} below.
 
\end{proof}

\begin{definition}
For any $k$-partition $l=n_1 + n_2 + \cdots + n_k$ of a postitive integer $l$ into $k$ positive integers ($k \ge 1$), let $C(l,k;n_1, \cdots, n_k)$ be the set of all \emph{$k$-compositions}  of $l$ defining the given partition, i.e., all ordered $k$-tuples of positive integers $\mathbf{c}=(c_1, \cdots, c_k)$ which are permutations of $(n_1, \cdots, n_k)$. In other words, a composition is an ordered sequence of positive integers which sum up to $l$, while for partition the order of the summands is ignored. We also denote by $C(l,k)$ the set of all $k$-compositions of $l$.

We set $\hat{C}(k-1;n_1, \cdots, n_{k})$ to be the set of all $(k-1)$-tuple $\hat{c}$ obtained by erasing one of the coordinates of some $c \in C(l,k;n_1, \cdots, n_k)$. Note that if two summands in a $k$-composition are the same, erase either of them might give the same $(k-1)$-composition.

For a $k$-tuple $\mathbf{c}=(c_1, \cdots c_k)$ of positive integers, we set
  \[ R(\mathbf{c}) = \prod_{i=1}^k \frac{1}{c_i}, \quad RS(\mathbf{c}) = \prod_{i=1}^{k} \frac{1}{\sum_{j=1}^i c_j }.  \]
  
\end{definition}

\begin{lemma}\label{lemma:frac}
  Given any partition $l=n_1 + n_2 + \cdots + n_k$, we have the equality
    \begin{equation}
      \sum_{\mathbf{c} \in C(l,k;n_1, \cdots, n_k)} \prod_{i=1}^{k} \frac{1}{\sum_{j=1}^i c_j } = \frac{1}{k!}  \sum_{\mathbf{c} \in C(l,k;n_1, \cdots, n_k)} \prod_{i=1}^{k} \frac{1}{c_i}.
    \end{equation}
\end{lemma}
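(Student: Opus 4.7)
My plan is to reduce the identity to the classical permutation identity
\[
\sum_{\pi \in S_k} \prod_{i=1}^k \frac{1}{n_{\pi(1)} + \cdots + n_{\pi(i)}} = \prod_{i=1}^k \frac{1}{n_i},
\]
and prove the latter by induction on $k$.

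First I would observe that the product $\prod_{i=1}^k 1/c_i$ depends only on the underlying multiset, so it equals $\prod_i 1/n_i$ for every $\mathbf{c} \in C(l,k;n_1,\ldots,n_k)$. Writing $m_\ell$ for the multiplicity of each distinct value in the multiset $\{n_1,\ldots,n_k\}$, one has $|C(l,k;n_1,\ldots,n_k)| = k!/\prod_\ell m_\ell!$, so the right-hand side of Lemma \ref{lemma:frac} simplifies to $\tfrac{1}{\prod_\ell m_\ell!}\prod_i \tfrac{1}{n_i}$. On the left, each composition in $C$ is reached by exactly $\prod_\ell m_\ell!$ orderings $\pi \in S_k$, so for any tuple function $f$ one has $\sum_{\mathbf{c}\in C} f(\mathbf{c}) = \tfrac{1}{\prod_\ell m_\ell!}\sum_{\pi \in S_k} f(n_{\pi(1)},\ldots,n_{\pi(k)})$. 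Clearing the common factor $\prod_\ell m_\ell!$ from both sides of the lemma, what remains to prove is precisely the permutation identity displayed above.

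For the induction, the base case $k=1$ is tautological. In the inductive step, the $i=k$ factor equals the constant $1/l$ (since all partial sums of length $k$ are $l$); pulling it out and splitting the remaining sum by the value $\alpha = \pi(k)$ gives
\[
\sum_{\pi \in S_k} \prod_{i=1}^k \frac{1}{\sum_{j \le i} n_{\pi(j)}} = \frac{1}{l}\sum_{\alpha=1}^{k}\sum_{\substack{\pi \in S_k\\ \pi(k)=\alpha}}\prod_{i=1}^{k-1}\frac{1}{n_{\pi(1)}+\cdots+n_{\pi(i)}}.
\]
For each fixed $\alpha$, the inner sum is indexed by bijections $\{1,\ldots,k-1\}\to\{1,\ldots,k\}\setminus\{\alpha\}$ and, by the inductive hypothesis applied to the $(k-1)$-tuple $(n_i)_{i\ne\alpha}$, evaluates to $\prod_{i\ne\alpha} 1/n_i$. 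Summing over $\alpha$ then yields $\tfrac{1}{l}\sum_\alpha \tfrac{n_\alpha}{\prod_i n_i} = \prod_i 1/n_i$, completing the induction.

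I do not foresee a genuine obstacle; the only care needed is to keep the multiplicity factor $\prod_\ell m_\ell!$ straight when passing between $C(l,k;n_1,\ldots,n_k)$ and $S_k$, and to correctly index the remaining tuple when removing $n_\alpha$. As an alternative that avoids induction, one may use the Laplace representation $1/N = \int_0^\infty e^{-Nt}\,dt$ to write each summand as an integral of $\exp\bigl(-\sum_j n_{\pi(j)}u_j\bigr)$ over the simplex $u_1\ge\cdots\ge u_k\ge 0$; summing over $\pi \in S_k$ reassembles the full orthant $[0,\infty)^k$ and produces $\prod_i 1/n_i$ directly.
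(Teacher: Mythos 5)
Your proof is correct. It proves the same statement by the same underlying induction on $k$, but you first pass from sums over the set $C(l,k;n_1,\ldots,n_k)$ of compositions to sums over the full symmetric group $S_k$, at the cost of a single overall factor $\prod_\ell m_\ell!$. This reduces the lemma to the cleaner, well-known identity
\[
\sum_{\pi\in S_k}\prod_{i=1}^k\frac{1}{n_{\pi(1)}+\cdots+n_{\pi(i)}}=\prod_{i=1}^k\frac{1}{n_i},
\]
which you then prove by peeling off the last index $\pi(k)=\alpha$. The paper instead works directly with compositions, expanding $\prod 1/c_i = \frac{1}{l}\sum_p\prod_{j\ne p}1/c_j$ and introducing the auxiliary set $\hat C(k;n_1,\ldots,n_{k+1})$ of tuples with one coordinate erased; this forces some care about repeated summands (the paper explicitly flags the possibility that erasing two equal entries gives the same $(k-1)$-tuple). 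Your $S_k$-version sidesteps that bookkeeping entirely, since distinct permutations are always distinct indices; you also apply the inductive hypothesis to a single $(k-1)$-tuple per $\alpha$, whereas the paper applies it to each of the sub-sums hidden inside $\hat C$. Another minor difference of orientation: you manipulate the partial-sums side and close up to $\prod 1/n_i$, while the paper starts from $\frac{1}{(k+1)!}\sum R(\mathbf c)$ and works toward $\sum RS(\mathbf c)$; these are mirror images of the same peeling step. The Laplace-transform argument you sketch at the end is a genuinely different, non-inductive proof of the $S_k$ identity (decomposing $[0,\infty)^k$ into the $k!$ order simplices), and is a nice way to see why the identity is true with no combinatorics at all.
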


\begin{proof}
  We argue by induction on $k$. The $k=1$ case is trivial. Suppose the equality holds for for some $k \in \Zplus$ and any $k$-partition of any $l \in \Zplus$, we want to prove it for $k+1$. Suppose $l=n_1 + \cdots + n_{k+1}$ is a $(k+1)$-partition of $l$. Then for any $\mathbf{c}=(c_1, \cdots, c_{k+1}) \in C(l,k+1 ; n_1, \cdots, n_{k+1})$,
  \begin{equation}
    \begin{split}
      \prod_{i=1}^{k+1} \frac{1}{c_i} = \frac{1}{l} \cdot \frac{c_1 + c_2 + \cdots + c_{k+1}}{c_1 c_2 \cdots c_{k+1}} =  \frac{1}{l} \sum_{p=1}^{k+1} \frac{1}{c_1 \cdots \hat{c}_p \cdots c_{k+1}}, 
    \end{split}
  \end{equation}
  hence
    \begin{equation}
      \begin{split}
         & \frac{1}{(k+1)!}  \sum_{\mathbf{c} \in C(l,k+1;n_1, \cdots, n_{k+1})} \prod_{i=1}^{k+1} \frac{1}{c_i}  \\
      = & \frac{1}{(k+1)!}  \sum_{\mathbf{c} \in C(l,k+1;n_1, \cdots, n_{k+1})}  \frac{1}{l}\sum_{p=1}^{k+1} \frac{1}{c_1 \cdots \hat{c}_p \cdots c_{k+1}}   \\
      = & \frac{1}{l} \cdot \frac{1}{(k+1)!} \cdot (k+1) \sum_{\mathbf{\hat{c}} \in \hat{C}( k;n_1, \cdots, n_{k+1})} R(\hat{\mathbf{c}}),  \\
      = & \frac{1}{l} \sum_{\mathbf{\hat{c}} \in \hat{C}( k;n_1, \cdots, n_{k+1})}RS(\hat{\mathbf{c}}) \\
      = & \sum_{\mathbf{c} \in C(l,k+1;n_1, \cdots, n_{k+1})} RS(\mathbf{c})
       \end{split}
     \end{equation}
  where the factor $k+1$ in the third line appears because any erased $n_i$ in the partition can put in $k+1$-different positions to recover a $(k+1)$-composition. The last second equality is by induction hypothesis for $k$-compositions.
\end{proof}

\section{Geometric meaning of the classes $\gamma_\sigma$ and $\Todd$}\label{sec:gamma}

In this section we give interpretations of $\gamma_\sigma$ and $\Todd$ as certain obstruction classes. Recall we have a closed embedding $i: X \hookrightarrow Y$ and a holomorphic splitting $\sigma$ of the short exact sequence \eqref{seq:normal}. $\sigma$ is equivalent to a retraction $p_\sigma: X^{\upscript{(1)}}_Y \to X$ from the first-order neighborhood $X^{\upscript{(1)}}_Y$ of $X$ to $X$, such that $p_\sigma \circ i = \Id_X$. Hence we have the pullback vector bundle  $p_\sigma^* N^\vee$ over $X^{\upscript{(1)}}_Y$ (i.e., a locally free sheaf over $\Osheaf_{X^{\upscript (1)}_Y}$ on $X$), which is an extension of the conormal bundle $N^\vee$, i.e., $i^*(p^*_\sigma N^\vee) \simeq N^\vee$.

\begin{proposition}\label{prop:gamma}
      $p_\sigma^* N^\vee$ can be extended to a holomorphic vector bundle $\overline{N}^\vee$ over the second-order formal neighborhood $X^{\upscript{(2)}}_Y$, if and only if the class $\gamma_\sigma$ vanishes.
\end{proposition}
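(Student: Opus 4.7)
The plan is to apply the standard obstruction theory for deformations of vector bundles along square-zero thickenings and identify the resulting obstruction class with $\gamma_\sigma$ using the Dolbeault model of \S\ref{subsec:DolbeaultDGA}.

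The inclusion $\Xfirst \hookrightarrow \Xsecond$ is a square-zero thickening with ideal sheaf $S^2 N^\vee$. By classical deformation theory (Illusie's cotangent complex formalism), the obstruction to extending the locally free $\Osheaf_{\Xfirst}$-module $p_\sigma^* N^\vee$ to a locally free $\Osheaf_{\Xsecond}$-module is a class
\[
\mathrm{obs}(p_\sigma^* N^\vee) \in \gext^2_{\Osheaf_{\Xfirst}}(p_\sigma^* N^\vee, p_\sigma^* N^\vee \otimes S^2 N^\vee) \simeq H^2(X, \End(N^\vee) \otimes S^2 N^\vee),
\]
which vanishes iff the extension exists. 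This class equals the Yoneda pairing of the Atiyah class $\mathrm{At}(p_\sigma^* N^\vee)$ with the extension class $\delta \in \gext^1(\Omega^1_{\Xfirst}, S^2 N^\vee)$ of the cotangent sequence $0 \to S^2 N^\vee \to \Omega^1_{\Xsecond}|_{\Xfirst} \to \Omega^1_{\Xfirst} \to 0$.

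The next step is to compute each factor upon restriction to $X$, using the splitting $\sigma$ to identify $\Omega^1_{\Xfirst}|_X \simeq N^\vee \oplus \Omega^1_X$. The Atiyah class $\mathrm{At}(p_\sigma^* N^\vee)|_X$ has a vanishing $\Hom(N^\vee, \End N^\vee)$-component---since $p_\sigma^* N^\vee$ is pulled back along $\sigma$ and therefore flat in the $N^\vee$-direction---while its $\Hom(\Omega^1_X, \End N^\vee)$-component is the usual Atiyah class of $N^\vee$, represented by $R_N = [\dbar, \nabla^\bot]$. Likewise, $\delta|_X$ decomposes into a $\Hom(N^\vee, S^2 N^\vee)$-summand and a $\Hom(\Omega^1_X, S^2 N^\vee)$-summand; the latter is precisely $R_2^\top$, the Atiyah-class coefficient $B_2 = R_2^\top$ of $TY$ appearing in the Dolbeault formula \eqref{eq:Dnormal} for $\Dnormal$.

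Taking the Yoneda pairing, only the $\Omega^1_X$-legs contract nontrivially, producing
\[
\mathrm{obs}(p_\sigma^* N^\vee) = [R_2^\top \circ R_N] = \gamma_\sigma \in H^2(X, S^2 N^\vee \otimes \End N^\vee) \hookrightarrow H^2(X, (N^\vee)^{\otimes 3} \otimes N),
\]
which proves both implications. The main obstacle is the precise identification of the $\Hom(\Omega^1_X, S^2 N^\vee)$-component of $\delta$ with $R_2^\top$: the cleanest route is to pick local coordinates $(z^\alpha, w^i)$ on $Y$ adapted to $\sigma$ and the connection $\nabla^\bot$, write the cotangent-sequence cocycle in these coordinates, and match it to the $\widetilde{B}_2 \circ \nabla^\bot$ term of $\Dnormal$ from \S\ref{subsec:DolbeaultDGA}. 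The vanishing of the $N^\vee$-component of $\mathrm{At}(p_\sigma^* N^\vee)|_X$ is then a short check from the pullback definition.
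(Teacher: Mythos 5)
Your proposal takes a genuinely different route from the paper's. The paper works entirely inside the Dolbeault model: it writes down the second-order differential $\Dnormal^{(2)} = \dbar + \widetilde{R}^\bot_2 + \widetilde{R}^\top_2 \circ \partial$ from \eqref{eq:Dnormal_2nd}, observes that any extension $\overline{N}^\vee$ corresponds to a compatible $\dbar$-connection $D_{\overline{N}^\vee} = \dbar + R^\top_2 \circ \nabla^\bot + C$ with $C \in \A^{0,1}_X(S^2 N^\vee \otimes \End(N^\vee))$, and reads off that integrability $(D_{\overline{N}^\vee})^2 = 0$ is exactly the equation $R^\top_2 \circ R_N + \dbar C = 0$, i.e., $\gamma_\sigma = 0$. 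You instead invoke abstract deformation theory: the obstruction to lifting a locally free sheaf across a square-zero thickening is the Yoneda product of its Atiyah class with the Kodaira-Spencer class of the thickening. This is a correct and standard framework (Illusie, Huybrechts-Thomas), and your identification of the target group, the vanishing of the $N^\vee$-component of $\mathrm{At}(p_\sigma^* N^\vee)|_X$, and of the $\Omega^1_X$-component with $R_N$ all look right. The abstract approach is more conceptual and would transfer to other square-zero extensions; the paper's approach is more elementary and computationally shorter, since it never leaves the Dolbeault dga and only needs a one-line integrability check.

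That said, your proposal has a real gap that you yourself flag: the identification of the $\Hom(\Omega^1_X, S^2 N^\vee)$-component of the cotangent extension class $\delta|_X$ with $R^\top_2$ is not actually carried out. This is precisely the content that, in the paper's approach, is absorbed into the already-established formula \eqref{eq:Dnormal_2nd} for $\Dnormal^{(2)}$ --- there the coefficient $B_2 = R^\top_2$ multiplying the $\partial$-term is handed to you by Theorem 4.5 of \cite{LinfAlgebroid}. Your route reintroduces this identification as a fresh coordinate computation. Moreover, the Yoneda-product formula for the obstruction involves delicate sign conventions (and must be checked to hold in the complex-analytic/formal setting, not just for schemes); matching those signs against the Dolbeault representatives is another nontrivial step you have not addressed. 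Until both of these are done, the argument shows only that the obstruction is \emph{some} class of the correct type, not that it equals $\gamma_\sigma$ on the nose. To close the gap cleanly, you would be better off deriving the cotangent-sequence cocycle directly from \eqref{eq:Dnormal_2nd} (essentially reproving the paper's structural claim about $D_{\overline{N}^\vee}$), at which point the abstract machinery has bought you little over the paper's one-equation argument.
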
     

\begin{proof}
  With the chosen $\sigma$, the $\dbar$-derivation on the Dolbeault dga $\A^\bullet(X^{\upscript{(2)}}_Y) \simeq \AOD_X(S^{\leq 2} N^\vee )$ of the second order formal neighborhood $X^{\upscript{(2)}}_Y$ is of the form
   \begin{equation}\label{eq:Dnormal_2nd}
      \Dnormal^{(2)} = \dbar + \widetilde{R}^\bot_2 + \widetilde{R}^\top_2 \circ \partial,   
   \end{equation}
where $\partial$ is the $(1,0)$-part of the de Rham differential on $X$. The Dolbeault complex of $p_\sigma^*N^\vee$ over $X^{\upscript{(1)}}_Y$ is isomorphic to $\AOD_X(S^{\leq 1} N^\vee \otimes N^\vee)$ with the usual $\dbar$-connection induced by the holomorphic structure of $N^\vee$. Any extension $\overline{N}^\vee$ of $p_\sigma^*N^\vee$ over $X^{\upscript{(2)}}_Y$ is equvalent to a $\dbar$-connection on the complex $\Adot(\overline{N}^\vee) = \AOD_X(S^{\leq 2} N^\vee \otimes N^\vee)$ compatible with $\Dnormal^{(2)}$ of the form
  \begin{equation}\label{eq:DNvee}
    D_{\overline{N}^\vee} = \dbar + R^\top_2 \circ \nabla^\bot + C,
  \end{equation}
where $C \in \A^{0,1}_X(\Hom(N^\vee, S^2 N^\vee \otimes N^\vee)) = \A^{0,1}_X(S^2 N^\vee \otimes \End(N^\vee))$ (cf. \eqref{eq:Kconn_2nd}). The integrability condtion $( D_{\overline{N}^\vee} )^2 = 0$ is hence equivalent to the equality
  \[  R^\top_2 \circ R_N + \dbar C = 0, \] 
i.e., $\gamma_\sigma = [ R^\top_2 \circ R_N ] = 0 $.  
  
\end{proof}

The following result gives a geometric interpretation of the class $[R^\bot_2] \in H^1(X, S^2 N^\vee \otimes N)$ and strengthens Theorem 1.3 of \cite{GrivauxHKR}.
     
\begin{proposition}
      Suppose that $\gamma_\sigma$ vanishes. Then the class $[R^\bot_2]$ vanishes if and only if for any vector bundle $\overline{N}^\vee$ over $X^{\upscript{(2)}}_Y$ extending $p^*_\sigma N^\vee$ over $X^{\upscript{(1)}}_Y$, its dual $\overline{N}$ admits a holomorphic section vanishing exactly on $X$ and being transverse to the zero section. In this case $\Todd=1$.
\end{proposition}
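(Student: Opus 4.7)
The plan is to compute directly in the Dolbeault model of $\overline{N}$ over $X^{\upscript{(2)}}_Y$, paralleling the proof of Proposition \ref{prop:gamma}. Under the identification $\A^\bullet(X^{\upscript{(2)}}_Y) \simeq \AOD_X(S^{\leq 2} N^\vee)$ with differential $\Dnormal^{(2)}$ as in \eqref{eq:Dnormal_2nd}, and the connection $D_{\overline{N}^\vee}$ of \eqref{eq:DNvee} on an extension $\overline{N}^\vee$, the duality pairing forces the $\dbar$-connection on $\overline{N}$ to take the form $D_{\overline{N}} = \dbar + R^\top_2 \circ \nabla^\bot - C^*$ on $\AOD_X(S^{\leq 2}N^\vee \otimes N)$, where $C^* \in \A^{0,1}_X(S^2 N^\vee \otimes \End(N))$ is the transpose of $C$. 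A holomorphic section of $\overline{N}$ is then an element $s \in \A^{0,0}_X(S^{\leq 2}N^\vee \otimes N)$ with $D_{\overline{N}} s = 0$, and under the symmetric decomposition $s = s_0 + s_1 + s_2$ with $s_i \in \A^{0,0}_X(S^i N^\vee \otimes N)$, vanishing on $X$ means $s_0 = 0$, while transversality to the zero section amounts to $s_1 \in \A^0(\End(N))$ being pointwise invertible.

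The central step is an explicit Dolbeault computation performed in local holomorphic frames $\{e_i\}$ of $N$ and dual $\{e^i\}$ of $N^\vee$, which after application of Leibniz and the truncation $S^{\geq 3} = 0$ yields the identity
\[
D_{\overline{N}}(s_1 + s_2) \equiv \dbar s_1 + R^\bot_2 \cdot s_1 + \dbar s_2 \pmod{S^{\geq 3}},
\]
where $R^\bot_2 \cdot s_1 := (\Id_{S^2 N^\vee} \otimes s_1)(R^\bot_2) \in \A^{0,1}_X(S^2 N^\vee \otimes N)$, and in particular $D_{\overline{N}}(\Id_N) = R^\bot_2$. The main obstacle, and what makes the result uniform in $\overline{N}^\vee$, is the observation that both $R^\top_2 \circ \nabla^\bot$ and $C^*$ raise the symmetric degree by two, so the terms they produce on $s_1$ or $s_2$ land in $S^{\geq 3}$ and are killed by the truncation. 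Consequently, the equation $D_{\overline{N}} s = 0$ is insensitive to the choice of the extension class $C$, which justifies the ``for any $\overline{N}^\vee$'' clause in the statement.

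Splitting $D_{\overline{N}}(s) = 0$ by symmetric degree, the $S^1$-part reads $\dbar s_1 = 0$ (so $s_1$ must be a holomorphic automorphism of $N$) and the $S^2$-part reads $\dbar s_2 = -R^\bot_2 \cdot s_1$. Since $s_1$ is holomorphic and pointwise invertible, $\Id \otimes s_1$ is a holomorphic isomorphism of $S^2 N^\vee \otimes N$, so $[R^\bot_2 \cdot s_1] = 0$ in $H^1(X, S^2 N^\vee \otimes N)$ if and only if $[R^\bot_2] = 0$. This gives the equivalence: if $[R^\bot_2] = 0$, take $s_1 = \Id_N$ and solve $\dbar s_2 = -R^\bot_2$ to obtain the desired section $s = \Id_N + s_2$; conversely, any transverse holomorphic section forces the cohomology class to vanish. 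Finally, when $[R^\bot_2] = 0$ one may choose a representative with $R^\bot_2 \equiv 0$, and since the power series $x/(1 - e^{-x}) = 1 + x/2 + \cdots$ equals $1$ at zero, the generalized Todd class collapses to $\Todd = \det(\Id) = 1$.
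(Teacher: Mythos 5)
Your argument is essentially the same as the paper's: work in the Dolbeault model $\AOD_X(S^{\leq 2}N^\vee \otimes N)$ for $\overline{N}$, expand a candidate section by symmetric degree, and read off the holomorphicity conditions degree by degree; the forward direction uses the invertibility of $s_1$ to exhibit $R^\bot_2$ as a $\dbar$-coboundary, and the converse sets $s_1=\Id_N$ and solves $\dbar s_2 = -R^\bot_2$. Your observation that $R^\top_2\circ\nabla^\bot$ and $C^*$ both raise symmetric degree by two, hence vanish after truncation on $s_1$ and $s_2$, is also the point the paper relies on implicitly when only $\dbar v_1$ and $R^\bot_2 \circ v_1 + \dbar v_2$ survive.

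However, your displayed formula $D_{\overline{N}} = \dbar + R^\top_2\circ\nabla^\bot - C^*$ is missing the term $\widetilde{R}^\bot_2$ coming from $\Dnormal^{(2)}$ acting on the $S^{\leq 2}N^\vee$ coefficients; the paper's formula \eqref{eq:DN} has $D_{\overline{N}} = \dbar + \widetilde{R}^\bot_2 + R^\top_2\circ\nabla^\bot \pm C$. This is not a pedantic point: $\widetilde{R}^\bot_2$ is the \emph{only} term, besides $\dbar$, that survives the truncation, and it is precisely what produces $R^\bot_2\cdot s_1$ in your $S^2$-equation and the identity $D_{\overline{N}}(\Id_N)=R^\bot_2$. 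As written, your stated formula would instead give $D_{\overline{N}}(s_1+s_2)\equiv\dbar s_1 + \dbar s_2$ modulo $S^{\geq 3}$, and the whole argument would evaporate. Your subsequent computation evidently carries $\widetilde{R}^\bot_2$ along correctly, so this is an omission in the displayed formula rather than a flaw in the reasoning, but it should be restored. Your final paragraph on $\Todd=1$ is a small addition not spelled out in the paper's proof and is correct: since $\Todd$ depends only on the cohomology class $[R^\bot_2]$, a vanishing class lets you take the zero representative and the determinant of the identity is $1$.
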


\begin{proof}
  The Dolbeaut complex $\Adot(\overline{N})$ of $\overline{N}$ over $X^{\upscript{(2)}}_Y$ can be identified with the complex 
  \[  \AOD_X( S^{\leq 2} N^\vee \otimes N) \simeq \Hom (\Adot(\overline{N}^\vee) , \Adot( X^{\upscript{(2)}}_Y ) ). \] 
  By Eq. \eqref{eq:Dnormal_2nd} and \eqref{eq:DNvee} in the proof of Prop. \ref{prop:gamma}, the corresponding $\dbar$-connection of $\overline{N}$ can be written as
  \begin{equation}
    D_{\overline{N}} (\phi) = \Dnormal^{(2)} \circ \phi - (-1)^{|\phi|} \phi \circ D_{\overline{N}^\vee}
  \end{equation}
for $\phi \in  \Hom (\Adot(\overline{N}^\vee) , \Adot( X^{\upscript{(2)}}_Y ) )$. Hence
  \begin{equation}\label{eq:DN}
    D_{\overline{N}} = \dbar + \widetilde{R}^\bot_2 + R^\top_2 \circ \nabla^\bot \pm C,
  \end{equation}
where $\widetilde{R}^\top_2$ acts on the $S^{\leq 2} N^\vee$-part of $\Adot(\overline{N}) =  \AOD_X( S^{\leq 2} N^\vee \otimes N)$, $C$ is regarded as an element of $\A^{0,1}_X(S^2 N^\vee \otimes \End(N))$  we also use the same notion for the connection on $N$ induced by $\nabla^\bot$ on $N^\vee$.
 
Now given a section $v \in \A^{0,0}(\overline{N}) =  \A^{0,0}_X( S^{\leq 2} N^\vee \otimes N)$, we can write it as $v= v_0 + v_1 + v_2$, such that $v_i \in \A^{0,0}_X(S^i N^\vee \otimes N)$, $i=0,1,2$. $v$ vanishes over $X$ if and only if its component $v_0$ in $\A^{0,0}_X(S^0 N^\vee \otimes N)$ is zero. It is transverse to $X$ if and only if $v_1 \in \A^{0,0}_X(S^1 N^\vee \otimes N) = \A^{0,0}_X(\End(N^\vee))$ is invertible. Such $v$ is holomorphic, i.e., $D_{\overline{N}} (v) = 0$, if and only if $\dbar v_1 = 0$ and $ R^\bot_2 \circ v_1 + \dbar v_2= 0$ by \eqref{eq:DN}, which is equivalent to $R^\bot_2 = - (\dbar v_2) \circ v^{-1}_1 = - \dbar ( v_2 \circ v^{-1}_1)$ since $\dbar v_1 = 0$. So we have shown that if such $v$ exists, $[R^\bot_2] = 0$. 

Conversely, assume $[R^\bot_2]=0$, it suffices to set $v_0=0$, $v_1 = \Id \in \A^{0,0}_X(\End(N^\vee)) \subset \A^{0,0}_X( S^{\leq 2} N^\vee \otimes N) = \A^0(\overline{N})$ and choose $v_2$ such that $\dbar v_2 = - R^\bot_2$.
 
\end{proof}

\bibliographystyle{amsalpha}
\addcontentsline{toc}{chapter}{Bibliography}
\bibliography{Todd}

\end{document}